\newtheorem{theorem}{Theorem}
\newtheorem{proposition}[theorem]{Proposition}
\newtheorem{definition}[theorem]{Definition}
\newtheorem{lemma}[theorem]{Lemma}
\newenvironment{method}{\noindent\textbf{Method for computing edit distance:} \\}{}
 \font\xviiroman=cmr17
\def\udot{\mathbin{\ooalign{$\cup$\crcr
   \hfil\raise 8pt\hbox{\xviiroman.}\hfil\crcr}}}
\def\bigudotx#1#2{\mathop{\smash{\ooalign{$#1\bigcup$\crcr
   \hfil\raise 8pt\hbox{#2}\hfil\crcr}}\vphantom{\bigcup}}}
\newcommand{\dotcup}{\udot}
\newcommand{\F}{{\cal F}}
\newcommand{\K}{{\cal K}}
\newcommand{\PP}{{\cal P}}
\newcommand{\MM}{{\cal M}}
\newcommand{\HH}{{\cal H}}
\newcommand{\dist}{{\rm Dist}}
\newcommand{\forb}{{\rm Forb}}
\newcommand{\cmax}{c_{\rm max}}
\newcommand{\cmin}{c_{\rm min}}
\newcommand{\chib}{\chi_B}
\newcommand{\CP}{{\cal P}}
\newcommand{\vw}{{\rm VW}}
\newcommand{\vb}{{\rm VB}}
\newcommand{\ew}{{\rm EW}}
\newcommand{\eb}{{\rm EB}}
\newcommand{\eg}{{\rm EG}}
\newcommand{\vwk}{{\rm VW}(K)}
\newcommand{\vbk}{{\rm VB}(K)}
\newcommand{\ewk}{{\rm EW}(K)}
\newcommand{\ebk}{{\rm EB}(K)}
\newcommand{\egk}{{\rm EG}(K)}
\newcommand{\vwks}{|{\rm VW}(K)|}
\newcommand{\vbks}{|{\rm VB}(K)|}
\newcommand{\ewks}{|{\rm EW}(K)|}
\newcommand{\ebks}{|{\rm EB}(K)|}
\newcommand{\wk}{{\bf W}_K}
\newcommand{\bk}{{\bf B}_K}
\newcommand{\mkp}{{\bf M}_K(p)}
\newcommand{\mkstarp}{{\bf M}_{K^*}(p)}
\newenvironment{proof}{\noindent{\bf Proof.\,}}{\hfill$\Box$}
\newcommand{\A}{{\bf A}}
\newcommand{\M}{{\bf M}}
\newcommand{\one}{{\bf 1}}
\newcommand{\zero}{{\bf 0}}
\newcommand{\uu}{{\bf u}}
\newcommand{\x}{{\bf x}}
\newcommand{\z}{{\bf z}}
\newcommand{\del}{{\bf d}}
\newcommand{\arrows}{\mapsto_c}
\newcommand{\kw}{K_{\rm W}}
\newcommand{\kb}{K_{\rm B}}
\newcommand{\kbc}{K_{{\rm B}\setminus {\rm C}}}
\newcommand{\upspace}{\vspace{-9pt}}
\newcommand{\itsp}{\upspace\item}
\newcommand{\firstspace}{\vspace{4pt}}
\newcommand{\setm}{-}
\title{Edit distance and its computation}
\author{J\'ozsef Balogh\thanks{Department of Mathematics, University
of Illinois at Urbana-Champaign, Urbana, IL, 61801,
\texttt{jobal@math.uiuc.edu}. This author's research supported in
part by NSF grant DMS-0600303, UIUC Campus Research Board \#07048
and by OTKA grants T034475 and T049398.} \and Ryan
Martin\thanks{Department of Mathematics, Iowa State University,
Ames, IA 50011, \texttt{rymartin@iastate.edu}. This author's
research supported in part by NSA grant H98230-05-1-0257.}}
\date{\dateline{Sep 26, 2007}{Jan 17, 2008}\\
\small Mathematics Subject Classification: 05C35, 05C80}
\begin{document}
\maketitle
\begin{abstract}
In this paper, we provide a method for determining the asymptotic
value of the maximum edit distance from a given hereditary property.
This method permits the edit distance to be computed without using
Szemer\'edi's Regularity Lemma directly.

Using this new method, we are able to compute the edit distance
from hereditary properties for which it was previously unknown.  For
some graphs $H$, the edit distance from $\forb(H)$ is computed,
where $\forb(H)$ is the class of graphs which contain no induced
copy of graph $H$.

Those graphs for which we determine the edit distance asymptotically
are $H=K_a+E_b$, an $a$-clique with $b$ isolated vertices, and
$H=K_{3,3}$, a complete bipartite graph.  We also provide a graph, the first such construction, for which the edit distance cannot be determined just by considering partitions of the vertex set into cliques and cocliques.

In the process, we develop weighted generalizations of Tur\'an's
theorem, which may be of independent interest.
\end{abstract}


\section{Introduction}
Throughout this paper, we use standard terminology in the theory of
graphs.  See, for example,~\cite{Bmgt}.  A subgraph devoid of edges,
usually called an independent set, is referred to in this paper as a
\textbf{coclique}, so that it parallels the notion of a
\textbf{clique}.

\subsection{Background}
The edit distance of graphs was defined in~\cite{AKM}
as follows:
\begin{definition}
Let $\PP$ denote a class of graphs.  If $G$ is a fixed graph, then
\textbf{the edit distance from $G$ to $\PP$} is
$$ \dist(G,\PP)=\min\left\{\left|E(F)\triangle E(G)\right| :
   F\in\PP, V(F)=V(G)\right\} $$
and \textbf{the edit distance from $n$-vertex graphs to $\PP$} is
$$ \dist(n,\PP)=\max\left\{\dist(G,\PP) : |V(G)|=n\right\} . $$
\end{definition}

It is natural to consider hereditary properties of graphs.  A
\textbf{hereditary property} is one that is closed under the
deletion of vertices. In fact, edge-modification for such properties
is an important question in computer science, as described in Alon
and Stav~\cite{AS1} and biology, as shown in~\cite{AKM}.

Clearly, $\forb(H)$ is a hereditary property for any graph $H$.  In
fact, every hereditary property, $\HH$, can be expressed as
$\bigcap_{H\in\F(\HH)}\forb(H)$, where the intersection is over the
family $\F(\HH)$, which consists of all graphs $H$ which are the
minimal elements of $\overline{\HH}$.

In~\cite{AS1}, Alon and Stav prove that, for every hereditary
property $\HH$, there exists a $p^*=p^*(\HH)$ such that, with high
probability, $\dist(n,\HH)=\dist\left(G(n,p^*),\HH\right)+o(n^2)$,
where $G(n,p)$ denotes the usual Erd\H{o}s-R\'enyi random graph.
This fact can be used to prove the existence of
$$ d^*(\HH)\stackrel{\rm def}{=}
   \lim_{n\rightarrow\infty}\dist(n,\HH)/\binom{n}{2} .  $$

\subsection{Previous results}
The previously-known general bounds for $\dist(n,\forb(H))$ are
expressed in terms of the so-called binary chromatic number:
\begin{definition}
The \textbf{binary chromatic number} of a graph $G$, $\chib(G)$ is
the least integer $k+1$ such that, for all $c\in\{0,\ldots,k+1\}$,
there exists a partition of $V(G)$ into $c$ cliques and $k+1-c$
cocliques.
\end{definition}

The binary chromatic number~\cite{AKM} is called the ``colouring
number'' of a hereditary property by Bollob\'as and
Thomason~\cite{BT97} and again by Bollob\'as~\cite{Bher} and is
called the parameter $\tau(H)$ in Pr\"omel and Steger~\cite{PS3}.
The term indicates its generalizibility to multicolorings of the
edges of $K_n$, or $K_{n,n}$ as in~\cite{AM}.


The binary chromatic number gives the value of $\dist(n,\forb(H))$
to within a multiplicative factor of $2$, asymptotically:
\begin{theorem}[\cite{AKM}]
If $H$ is a graph with binary chromatic number $\chib(H)=k+1$, then
$\left(\frac{1}{2k}-o(1)\right)\binom{n}{2}
\leq\dist\left(n,\forb(H)\right)\leq\frac{1}{k}\binom{n}{2}$.
\label{thm:AKM}
\end{theorem}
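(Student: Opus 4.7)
The plan splits into the easier upper bound and the more involved lower bound.

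\textbf{Upper bound.} For an arbitrary $n$-vertex graph $G$, the plan is to construct $F \in \forb(H)$ with $\dist(G, F) \le \tfrac{1}{k}\binom{n}{2}$. Since $\chi_B(H) = k+1$, there is some $c^* \in \{0, 1, \dots, k\}$ such that $V(H)$ admits no partition into $c^*$ cliques and $k - c^*$ cocliques (this is precisely the obstruction preventing $k$ parts from sufficing). Partition $V(G) = V_1 \sqcup \cdots \sqcup V_k$ as balanced as possible, and define $F$ by turning $V_i$ into a clique for $i \le c^*$, turning $V_i$ into a coclique for $i > c^*$, and leaving the bipartite graph between distinct parts identical to $G$. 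Any induced copy of $H$ in $F$ would decompose $V(H)$ into $c^*$ cliques and $k - c^*$ cocliques, contradicting the choice of $c^*$; so $F \in \forb(H)$, and the edits total at most $\sum_{i=1}^k \binom{|V_i|}{2} \le \tfrac{1}{k}\binom{n}{2}$.

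\textbf{Lower bound.} The plan is to take $G = G(n, 1/2)$ and argue that, with high probability, $\dist(G, F) \ge (\tfrac{1}{2k} - o(1))\binom{n}{2}$ for every $F \in \forb(H)$ simultaneously. For a fixed $F$, the distance $\dist(G, F)$ is $\mathrm{Binomial}\!\bigl(\binom{n}{2}, 1/2\bigr)$-distributed and concentrates at $\binom{n}{2}/2$; however a direct union bound via the Bollob\'as--Thomason count $|\forb(H) \cap 2^{[n]}| = 2^{(1 - 1/r(H) + o(1))\binom{n}{2}}$ only yields a rate of order $h^{-1}(1/r(H))$, which is strictly weaker than $1/(2k)$ for $k \ge 2$. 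Hence one must extract structure from $F$ rather than rely on counting alone.

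Concretely, apply Szemer\'edi's Regularity Lemma to $F$, label each reduced-graph pair by its density bracket (``sparse'', ``medium'', ``dense''), and use the Counting Lemma together with $F \in \forb(H)$ to forbid certain colored induced copies of $H$ in the reduced graph. A merging/Ramsey-style argument that explicitly invokes $\chi_B(H) = k + 1$ should then produce a partition $V(F) = U_1 \sqcup \cdots \sqcup U_s$ with $s \le k$ such that, at the cost of $o(n^2)$ further edits, each $F[U_i]$ is a clique or coclique. For $G = G(n, 1/2)$, Chernoff combined with a union bound over the at most $k^n$ partitions of size $\le k$ gives $e(G[U_i]) = \tfrac{1}{2}\binom{|U_i|}{2} + o(n^2)$ uniformly, so
\[
 \dist(G, F) \;\ge\; \tfrac{1}{2}\sum_{i=1}^s \binom{|U_i|}{2} - o(n^2) \;\ge\; \bigl(\tfrac{1}{2k} - o(1)\bigr)\binom{n}{2},
\]
where the final inequality is Cauchy--Schwarz applied to $\sum_i \binom{|U_i|}{2}$ (splitting the largest $U_i$ if $s < k$).

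\textbf{Main obstacle.} The structural step is the crux. A naive version of the ``$F$ is close to a $\le k$-part clique/coclique graph'' claim is false--for instance, $K_{n/2, n/2} \in \forb(K_3)$ with $k = 1$ is not within $o(n^2)$ edits of any single clique or coclique--so isolating the correct formulation, which uses $\chi_B(H)$ rather than $r(H)$ and is threaded through the Embedding Lemma in the colored reduced graph, is where all the work lies. Once that stability statement is in hand, the remainder of the proof is routine calculus with Chernoff and Cauchy--Schwarz.
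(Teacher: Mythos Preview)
Your upper bound is correct and is essentially the paper's argument (in its language, you are choosing the all-gray CRG $K(k-c^*,c^*)$ and running Step~A of Theorem~\ref{thm:fgthm}).

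For the lower bound, you have correctly diagnosed that the merging step is the obstacle, and in fact your own counterexample shows that the claim ``$F$ is $o(n^2)$-close to a disjoint union of at most $k$ cliques and cocliques'' is false in general. No Ramsey-type refinement will repair it: graphs in $\forb(H)$ are close to CRG-templates that may have arbitrarily many parts and prescribed complete or empty \emph{bipartite} pieces between parts, not merely to $k$-part clique/coclique graphs. So the ``stability statement'' you are hoping for does not exist in the form you want.

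The paper bypasses this entirely. After the regularity step (packaged as Theorem~\ref{thm:fgthm}) one has a CRG $K\in\K(\forb(H))$ on, say, $m$ vertices, and must show $f_K(1/2)\geq 1/(2k)$. The key point is that $H\not\arrows K$ forbids any gray $(k+1)$-clique in $K$: such a clique, with $a$ white and $c$ black vertices ($a+c=k+1=\chib(H)$), would accept $H$ via the partition of $V(H)$ into $a$ cocliques and $c$ cliques guaranteed by the definition of $\chib$. Tur\'an's theorem then gives $|\eg(K)|\leq\frac{k-1}{2k}m^2$, and so
\[
f_K(1/2)=\frac{1}{m^2}\left[\tfrac{m}{2}+|\ew(K)|+|\eb(K)|\right]=\frac{1}{2}-\frac{|\eg(K)|}{m^2}\geq\frac{1}{2k}.
\]
What you are missing is therefore not a structural stability theorem but a five-line Tur\'an count on the gray subgraph of the reduced CRG; no merging to $k$ parts is needed, or possible.
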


\subsubsection{Known values of $d^*$ and $p^*$}
In~\cite{AKM}, a large class of graphs $H$ for which $d^*(\forb(H))$
is known to be the lower bound in Theorem~\ref{thm:AKM} is
described.  Namely, if a graph $H$ has the property that
$\chib(H)=k+1$ and there exist $(A,c)$ and $(a,C)$ such that each of
the following occurs
\begin{itemize}\firstspace
   \itsp $V(H)$ cannot be partitioned into $c$ cliques and $A$
   cocliques,
   \itsp $V(H)$ cannot be partitioned into $C$ cliques and $a$
   cocliques,
   \itsp $A+c=a+C=k$ and $c\leq k/2\leq C$,
\end{itemize}
then
$$ d^*(\forb(H))=\frac{1}{2k} . $$

It is observed in~\cite{AS1} that if $H$ and $(A,c)$ and $(a,C)$
satisfy the conditions above, then $p^*(\forb(H))=1/2$. Furthermore,
if $H$ is a self-complementary graph, then $A=C$ and $a=c$.  So,
$C+c=k$, which implies $c\leq k/2\leq C$ and
$\left(p^*(\forb(H)),d^*(\forb(H))\right)=\left(1/2,1/(2k)\right)$.

The edit distance from monotone properties is also well-known.  A
\textbf{monotone property} is, without loss of generality, closed
under the removal of either vertices or edges. Let $\MM$ be a
monotone property of graphs.  The theorems of Erd\H{o}s and
Stone~\cite{ES46} and Erd\H{o}s and Simonovits~\cite{ES66} give that
$$ d^*(\MM)=1/r,\qquad\mbox{where } r=\min\{\chi(F)-1 : F\not\in\MM\}
\qquad\text{and}\qquad p^*(\MM)=1.$$

Alon and Stav, in~\cite{AS2}, prove that
$ d^*(\forb(K_{1,3}))=p^*(\forb(K_{1,3}))=1/3 . $
In this paper, we generalize this result to compute the pairs
$(p^*,d^*)$ for hereditary properties of the form $\forb(K_a+E_b)$
and $\forb\left(\overline{K_a+E_b}\right)$, where $K_a$ is a
complete graph on $a$ vertices, $E_b$ is an empty graph on $b$
vertices and the ``$+$'' denotes a disjoint union of graphs.  The
claw $K_{1,3}$ is $\overline{K_3+E_1}$.

In both~\cite{AKM} and in~\cite{AS2}, more precise results for
determining $\dist(n,\HH)$ are given for several families of
hereditary properties.  For this paper, we concern ourselves
exclusively with the first-order asymptotics.

Finally, in~\cite{AS2}, a formula is given for the asymptotic value
of the distance\linebreak[4] $\dist(G(n,1/2),\HH)$ for an arbitrary hereditary
property $\HH$.  It generalizes the result, stated in~\cite{AS1} and
implicit from arguments in~\cite{AKM}, that almost surely,
$\dist(G(n,1/2),\forb(H))=\frac{1}{2(\chib(\forb(H))-1)}
\binom{n}{2}-o(n^2)$.  In this paper, we will further generalize
this by determining an asymptotic expression for $\dist(G(n,p),\HH)$
for all $p\in [0,1]$.

\subsection{Colored homomorphisms}
Next we recall three definitions from~\cite{AS1} which are
convenient for us.
\begin{definition}
A \textbf{colored regularity graph (CRG)}, $K$, is a complete graph
for which the vertices are partitioned $V(K)=\vw(K)\dotcup\vb(K)$
and the edges are partitioned
$E(K)=\ew(K)\dotcup\eg(K)\dotcup\eb(K)$.  The sets $\vw$ and $\vb$
are the white and black vertices, respectively, and the sets $\ew$,
$\eg$ and $\eb$ are the white, gray and black edges, respectively.
\end{definition}

Bollob\'as and Thomason (\cite{BT95},\cite{BT00}) originate the use
of this structure to define so-called basic hereditary properties.
In particular, the paper~\cite{BT00} generalizes the enumeration of
graphs with a given property $\cal P$ to the problem of
computing the probability that $G(n,p)\in\PP$.  The problems are
equivalent if $p=1/2$.  The papers use many of the techniques that
are repeated or cited in the subsequent works on edit distance and use other nontrivial ideas.

\begin{definition}
Let $K$ be a CRG with $V(K)=\{v_1,\ldots,v_k\}$.  The graph property
$\CP_{K,n}$ consists of all graphs $J$ on $n$ vertices for which
there is an equipartition $\A=\{A_i : 1\leq i\leq k\}$ of the
vertices of $J$ satisfying the following conditions for $1\leq
i<j\leq k$:
\begin{itemize}\firstspace
   \itsp if $v_i\in\vw(K)$, then $A_i$ spans an empty graph in $J$,
   \itsp if $v_i\in\vb(K)$, then $A_i$ spans a complete graph in
   $J$,
   \itsp if $\{v_i,v_j\}\in\ew(K)$, then $(A_i,A_j)$ spans an empty
   bipartite graph in $J$,
   \itsp if $\{v_i,v_j\}\in\eb(K)$, then $(A_i,A_j)$ spans a
   complete bipartite graph in $J$,
   \itsp if $\{v_i,v_j\}\in\eg(K)$, then $(A_i,A_j)$ is
   unrestricted.
\end{itemize}
If all of the above holds, we say that the equipartition
\textbf{witnesses the membership of $J$ in $\CP_{K,n}$}.
\end{definition}

\begin{definition}
A \textbf{colored-homomorphism} from a (simple) graph $F$ to a CRG,
$K$, is a mapping $\varphi : V(F)\rightarrow V(K)$, which satisfies
the following:
\begin{enumerate}
   \itsp If $\{u,v\}\in E(F)$ then either
   $\varphi(u)=\varphi(v)=t\in\vb(K)$, or $\varphi(u)\neq\varphi(v)$
   and $\left\{\varphi(u),\varphi(v)\right\}\in\eb(K)\cup\eg(K)$.
   \itsp If $\{u,v\}\not\in E(F)$ then either
   $\varphi(u)=\varphi(v)=t\in\vw(K)$, or $\varphi(u)\neq\varphi(v)$
   and $\left\{\varphi(u),\varphi(v)\right\}\in\ew(K)\cup\eg(K)$.
\end{enumerate}
Moreover, a colored-homomorphism can be defined from a CRG, $K'$, to
another CRG, $K''$, that satisfies the following:
\begin{enumerate}
\setcounter{enumi}{-1}
   \itsp If $v\in\vb(K')$, then $\varphi(v)\in\vb(K'')$.  If
   $v\in\vw(K')$, then $\varphi(v)\in\vw(K'')$.
   \itsp If $(u,v)\in\eb(K')$ then either
   $\varphi(u)=\varphi(v)=t\in\vb(K'')$, or
   $\varphi(u)\neq\varphi(v)$ and
   $\left(\varphi(u),\varphi(v)\right)\in\eb(K'')\cup\eg(K'')$.
   \itsp If $(u,v)\in\ew(K')$ then either
   $\varphi(u)=\varphi(v)=t\in\vw(K'')$, or
   $\varphi(u)\neq\varphi(v)$ and
   $\left(\varphi(u),\varphi(v)\right)\in\ew(K'')\cup\eg(K'')$.
\end{enumerate}
Note that we can use the second definition to include the first, by
defining $V(F)=\vw(F)\dotcup\vb(F)$ in such a way as to make the
colored-homomorphism legal with respect to the edge set.
\end{definition}

\begin{definition}
A CRG, $K'$, is \textbf{induced} in another CRG, $K$, if there is a
colored-homomor-phism $\varphi : V(K')\rightarrow V(K)$ such that
\begin{itemize}\firstspace
   \itsp $\varphi$ is an injection and \upspace\item for any
   $u,v\in V(K')$ for which
   $\left\{\varphi(u),\varphi(v)\right\}\in\eg(K)$, then
   $\{u,v\}\in\eg(K')$.
\end{itemize}
\end{definition}

\begin{definition}
A CRG, $K$, is an \textbf{$\HH$-colored regularity graph
($\HH$-CRG)} for a hereditary property $\HH$ if, for every graph
$J\not\in\HH$, there is no colored-homomorphism from $J$ to $K$.

Denote $\K(\HH)$ to be the family of all CRGs $K$ such that for
every graph $J\not\in\HH$ there is no colored-homomorphism from $J$
to $K$.  If there is no colored-homomorphism from $J$ to $K$, then
this is denoted as $J\not\arrows K$.  If there is a
colored-homomorphism from $J$ to $K$, then this is denoted as
$J\arrows K$.
\end{definition}

Observe that if $\HH=\bigcap_{H\in\F(\HH)}\forb(H)$, then an
$\HH$-CRG, $K$, is one such that for all $H\in\F(\HH)$, there is no
colored-homomorphism from $H$ into $K$.

\subsection{Functions of colored regularity graphs}
\subsubsection{Binary chromatic number}
Previous edit distance results were expressed in terms of the
so-called \textbf{binary chromatic number}, which can be viewed as
an invariant on CRGs for which the edge set is gray.
\begin{definition}
Let $K(a,c)$ denote the CRG with $a$ white vertices, $c$ black
vertices and all edges gray.

The \textbf{binary chromatic number} of a hereditary property $\HH$,
denoted $\chib(\HH)$, is the least integer $k+1$ such that,
$K(a,c)\not\in\K(\HH)$ for all $a,c$ such that $a+c=k+1$. This
definition means that $\chib(\forb(H))=\chib(H)$ for any graph $H$.
\end{definition}
This quantity is too specific for our purposes.  We need to
introduce a function that accounts for nongray edges in CRGs.

\subsubsection{The function $f$}
Given a CRG, $K$, we define two functions.  If $K$
has $k$ vertices, with the usual notation for the edge sets and the
vertex sets, then let
$$ f_K(p)\stackrel{\rm def}{=}
   \frac{1}{k^2}\left[p\left(\vwks+2\ewks\right)
                      +(1-p)\left(\vbks+2\ebks\right)\right] . $$

The function that defines $f_K(p)$ was introduced in~\cite{AS1} and
corresponds to equipartitioning the vertex set of some $G$ which is
chosen according to the distribution $G(n,p)$ and mapping the parts
of the partition to the vertices of $K$.  So, $f$ represents the
expected proportion of edges that are changed under the rule that if an edge
is mapped to a white edge or its endvertices are mapped to the same
white vertex, then the edge is removed and if a nonedge is mapped to
a black edge or its endvertices are mapped to the same black vertex,
then the edge is added.

The function $f_K(p)$, as a function of $p$, is a line with a slope
in $[-1,1]$.

\subsubsection{The function $g$}
The function $g_K(p)$ is defined by a quadratic program. It
corresponds not necessarily to an equipartition, but a partition
with optimal sizes.

In order to define $g$, we first define some matrices: Let $\wk$
denote the adjacency matrix of the graph defined by the white edges,
along with the first $\vwks$ diagonal entries being $1$ (corresponding to the white vertices) and the other
diagonal entries being $0$.  Let $\bk$ denote the adjacency matrix of
the graph defined by the black edges along with the last $\vbks$
diagonal entries being $1$  (corresponding to the black vertices) and the other diagonal entries being $0$.  We
define the matrix $\mkp$ as follows:
$$ \mkp=p\wk+(1-p)\bk . $$
With this, we define $g_K(p)$:
\begin{equation}
g_K(p):=\left\{\begin{array}{rlcl}
               \min & \multicolumn{3}{l}{\uu^T\mkp\uu} \\
               {\rm s.t.} & \uu^T\one & = & 1 \\
                          & \uu & \geq & \zero .
               \end{array}\right. \label{QP}
\end{equation}
If an optimal solution $\uu'$ has zero entries, then
$g_K(p)=g_{K^*}(p)$ for the CRG, $K^*$, induced in $K$, whose
vertices correspond to the nonzero entries of $\uu'$. (Note that
$K^*$ may depend on $\uu'$.)

\begin{lemma}
For any CRG $K$, and any $p\in[0,1]$, there
exists a CRG $K^*$, where $K^*$ is defined as a CRG induced in $K$ by the vertices which correspond to nonzero entries of $\uu'$, such that
$g_K(p)=g_{K^*}(p)=\frac{1}{\one^T\M_{K^*}^{-1}(p)\one}$.
\label{lem:glemma}
\end{lemma}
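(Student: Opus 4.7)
The plan is to start from an arbitrary optimiser $\uu'$ of the quadratic program~\eqref{QP}, restrict to its support, and then apply the method of Lagrange multipliers at the resulting interior optimum. Let $S=\{i:u'_i>0\}$ and let $K^*$ be the sub-CRG of $K$ induced on $S$. Padding any feasible solution of~\eqref{QP} on $K^*$ with zeros yields a feasible solution for $K$ with the same objective, while restricting $\uu'$ to $S$ yields a feasible solution for $K^*$ attaining $g_K(p)$. Consequently $g_K(p)=g_{K^*}(p)$ and the restricted vector $\uu^*$ lies in the relative interior of the simplex for $K^*$.

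At this interior optimum the nonnegativity constraints are inactive, so stationarity for the equality constraint $\one^T\uu=1$ reads $2\M_{K^*}(p)\uu^*=\lambda\one$ for some scalar $\lambda$. Dotting with $\uu^*$ and using $\uu^{*T}\one=1$ gives $\lambda=2g_{K^*}(p)$, hence $\M_{K^*}(p)\uu^*=g_{K^*}(p)\one$. If $\M_{K^*}(p)$ is invertible, I would then solve $\uu^*=g_{K^*}(p)\M_{K^*}^{-1}(p)\one$ and reinsert into $\one^T\uu^*=1$, which gives the claimed identity $g_{K^*}(p)=1/(\one^T\M_{K^*}^{-1}(p)\one)$.

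It remains to argue that the support $S$ can be chosen so that $\M_{K^*}(p)$ is invertible. Suppose $\zero\ne w\in\ker\M_{K^*}(p)$. For $p\in(0,1)$ the diagonal of $\M_{K^*}(p)$ is strictly positive, so $g_{K^*}(p)>0$; then if $\one^Tw\ne 0$ the normalised perturbation $(\uu^*+tw)/(1+t\one^Tw)$ would be simplex-feasible for small $|t|$ with objective $g_{K^*}(p)/(1+t\one^Tw)$, strictly smaller than $g_{K^*}(p)$ for the correct sign of $t$, contradicting optimality. Hence $\one^Tw=0$, so $\uu^*+tw$ itself stays on the feasible hyperplane, and the identity $\M_{K^*}(p)(\uu^*+tw)=g_{K^*}(p)\one$ together with $\one^Tw=0$ keeps the objective equal to $g_{K^*}(p)$ for all small $t$. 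Because $w$ has at least one positive and one negative coordinate, sliding $t$ until a coordinate first hits $0$ produces a new optimum with strictly smaller support. Iterating this support-shrinking step finitely many times yields an induced sub-CRG of $K$, still an optimiser, on which $\M$ is invertible, and then the previous paragraph closes the argument. The boundary values $p\in\{0,1\}$ are handled by a direct check (or by restricting attention to the finite collection of induced sub-CRGs and using continuity of $1/(\one^T\M_{K^*}^{-1}(p)\one)$ in $p$ wherever defined).

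The main obstacle I anticipate is the reduction step just described: one must guarantee that a singular $\M_{K^*}(p)$ always admits a kernel direction compatible with both the equality constraint and the objective, enabling the support to shrink. The key structural input is the strict positivity $g_{K^*}(p)>0$ for $p\in(0,1)$, which follows from the strictly positive diagonal of $\M_{K^*}(p)$ and rules out the degenerate alternative $\one^Tw\ne 0$. Some minor extra care is needed at $p\in\{0,1\}$, where a white or black vertex can contribute a zero diagonal entry.
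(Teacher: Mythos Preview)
Your approach is essentially the paper's: restrict to the support of an optimiser, argue that the restricted matrix can be taken invertible via a kernel-direction perturbation (the paper packages the iteration by simply starting from an optimiser with the maximum number of zero coordinates), and then read off the closed form. The paper derives the final identity by perturbing $\uu^*$ in the direction $\z=\M_{K^*}^{-1}\one/(\one^T\M_{K^*}^{-1}\one)$ rather than invoking Lagrange multipliers, but this is the same computation in different clothing. One small slip: in your $\one^Tw\neq 0$ case the objective of the normalised perturbation is $g_{K^*}(p)/(1+t\,\one^Tw)^2$, not $g_{K^*}(p)/(1+t\,\one^Tw)$; the conclusion is unaffected. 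Your explicit flag that $g_{K^*}(p)>0$ is needed (and that $p\in\{0,1\}$ requires separate care) is a point the paper glosses over.
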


We prove Lemma~\ref{lem:glemma} in Section~\ref{sec:glemma}.

\section{Results}
\subsection{General bounds}

Theorem~\ref{thm:fgthm} is our main theorem, relating the functions
$f$ and $g$.  For $p\in(0,1)$, the notation $G(n,p)$ is the random
variable that represents a graph on $n$ vertices chosen by a random
process in which each edge is present independently with probability
$p$.  For $m\geq 1$, $G(n,m)$ is the random variable that represents
a graph on $n$ vertices chosen uniformly at random from all $n$
vertex graphs with $\lfloor m\rfloor$ edges.

\begin{theorem}
For a hereditary property $\HH=\bigcap_{H\in\F(\HH)}\forb(H)$, let
$\K(\HH)$ denote all CRGs $K$ such that $H\not\arrows K$ for each
$H\in\F(\HH)$.  Then, $d^*(\HH)\stackrel{\rm
def}{=}\lim_{n\rightarrow\infty}\dist(n,\HH)/\binom{n}{2}$ exists.  Define
$$ f(p)\stackrel{\rm def}{=}\inf_{K\in\K(\HH)}f_K(p)\qquad\mbox{and}\qquad g(p)\stackrel{\rm def}{=}\inf_{K\in\K(\HH)}g_K(p). $$
Then it is the case that $f(p)=g(p)$ for all $p\in[0,1]$,
$$ d^*(\HH)=\max_{p\in [0,1]}f(p)=\max_{p\in [0,1]}g(p), $$
and $p^*(\HH)$ is the value of $p$ at which $f$ achieves its
maximum.  In addition, the function $f(p)=g(p)$ is concave.

Furthermore, for all $p\in (0,1)$,
$$ \max_{G:e(G)=p{\scriptstyle
   \binom{n}{2}}}\left\{\dist(G,\HH)\right\}=f(p)\binom{n}{2}+o(n^2) , $$
and for all $\epsilon>0$,
$\dist\left(G\left(n,p{\scriptstyle \binom{n}{2}}\right),\HH\right)\geq f(p)\binom{n}{2}-\epsilon n^2$,
 with probability approaching 1 as $n\rightarrow\infty$.
\label{thm:fgthm}
\end{theorem}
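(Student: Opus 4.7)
The plan splits the theorem into: (i) the identity $f(p) = g(p)$ and concavity; (ii) the upper bound $\dist(G, \HH) \leq f(p_G)\binom{n}{2} + o(n^2)$ for any $n$-vertex $G$ of edge density $p_G$; (iii) the matching lower bound for $G(n, p\binom{n}{2})$; and (iv) assembling (ii) and (iii) into $d^*(\HH) = \max_p f(p)$, with $p^*(\HH)$ identified as the maximizer via Alon--Stav~\cite{AS1}.

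For (i), the bound $g_K(p) \leq f_K(p)$ is immediate from the feasibility of $\uu = \one/k$ in~\eqref{QP}, hence $g \leq f$. For the reverse inequality, given $K$ apply Lemma~\ref{lem:glemma} to pass to an induced $K^*$ with a positive optimizer $\uu^*$, approximate $\uu^*$ by rationals $n_i/N$, and form the blow-up $K'$ in which each $v_i \in V(K^*)$ is replaced by $n_i$ copies of the same color, with intra-class edges colored to match $v_i$ and edges between the $v_i$-class and the $v_j$-class colored to match $\{v_i, v_j\}$. Composition with the collapse $K' \to K^*$ shows $K' \in \K(\HH)$, and a direct calculation yields $f_{K'}(p) \to (\uu^*)^T \M_{K^*}(p)\uu^* = g_K(p)$ as $N \to \infty$, giving $f \leq g$. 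Concavity is automatic, since $f = \inf_K f_K$ is a pointwise infimum of affine functions of $p$. For (ii), pick a CRG $K$ on $k$ vertices with $f_K(p_G) < f(p_G) + \epsilon$, choose a uniform random equipartition $V(G) = A_1 \dotcup \cdots \dotcup A_k$ indexed by $V(K)$, and apply the forced cleanup: clear or complete $A_i$ according to the color of $v_i$, and delete or fill each $(A_i, A_j)$ according to the color of $\{v_i, v_j\}$ (gray pairs are untouched). The result lies in $\CP_{K,n} \subseteq \HH$, and by linearity of expectation the expected number of modifications equals $f_K(p_G)\binom{n}{2} + o(n^2)$.

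For (iii), let $H \in \HH$ on $V(G)$ minimize $|E(G) \triangle E(H)|$ and apply the Szemer\'edi Regularity Lemma to $(G, H)$ to produce a reduced CRG $K$, with vertex and edge colors read off from the sparse/dense dichotomy on diagonal blocks and from density thresholds between blocks (intermediate densities become gray). A standard counting-lemma argument shows that if some $F \in \F(\HH)$ admitted a colored homomorphism to $K$, then an induced copy of $F$ would embed into $H$, contradicting $H \in \HH$; hence $K \in \K(\HH)$. Let $\uu$ record the part-size fractions, and use concentration of $G(n, p\binom{n}{2})$ to replace regular-pair densities by $p + o(1)$; the edit count is then bounded below by $\uu^T \M_K(p)\uu \binom{n}{2} - \epsilon n^2 \geq g_K(p)\binom{n}{2} - \epsilon n^2 \geq f(p)\binom{n}{2} - \epsilon n^2$. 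Combining (ii) and (iii) at a maximizer of $f$ yields $d^*(\HH) = \max_p f(p) = \max_p g(p)$ with $p^*(\HH)$ the argmax. The main obstacle lies in (iii): one must control the contributions of $\epsilon$-irregular pairs and of diagonal blocks carefully enough that the edit cost cannot be concentrated outside the mass distribution prescribed by $\M_K(p)$; this is the technical heart of the proof and the only step that genuinely invokes the regularity framework.
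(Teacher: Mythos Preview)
Your proposal is correct and follows essentially the same route as the paper: upper bound via a random $k$-partition and cleanup according to a near-optimal $K\in\K(\HH)$; lower bound by applying (a form of) the Regularity Lemma to the nearest $G'\in\HH$, reading off a CRG $K\in\K(\HH)$ via the counting lemma, and using concentration of $G(n,p)$ on all bounded-size partitions; $f=g$ via rational blow-ups of an optimal $K^*$; and concavity as an infimum of affine functions. The only cosmetic differences are that the paper uses an i.i.d.\ (not equi-) partition in the upper bound, obtaining $\dist(G,\HH)\le f_K(p)\binom{n}{2}$ with no $o(n^2)$ error, and in the lower bound it quotes the Alon--Stav generalized Regularity Lemma to get an equipartition and bounds below by $f_K(p)$ rather than your $g_K(p)$.
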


Of course, by definition,
$\dist(n,\forb(\HH))=d^*(\HH)\binom{n}{2}+o(n^2)$. \\

\noindent\textbf{Remark:} The main theorem of Alon and Stav~\cite{AS1} states,
informally, that there exists a $p^*=p^*(\HH)$ such that $\dist(n,\HH)=\dist(G(n,p^*),\HH)$. Here, we compute the first-order asymptotic of the edit distance and show that $f(p)\binom{n}{2}$ is asymptotically the maximum edit distance among all graphs of density $p$, and is achieved by the random graph $G(n,p{\scriptstyle \binom{n}{2}})$.  Informally, $\dist\left(G(n,p{\scriptstyle \binom{n}{2}}),\HH\right)=f(p)\binom{n}{2}+o(n^2)$ and in the proof, we show, that $\dist\left(G(n,p),\HH\right)=f(p)\binom{n}{2}+o(n^2)$ as well.

In addition, Theorem~\ref{thm:fgthm} has the advantage
that the edit distance can be computed, asymptotically, without
direct use of Szemer\'edi's Regularity Lemma.  As we see in
Theorems~\ref{thm:kaeb},~\ref{thm:k33},~\ref{thm:h9}
and~\ref{thm:p3k1}, the function $f(p)$ is very
useful in computing the values of $\left(p^*(\HH),d^*(\HH)\right)$.

The method for computing $(p^*,d^*)$ in this paper follows the same
pattern for every hereditary property. \\

\begin{method}
\indent\textbf{Upper bound:} Carefully choose CRGs,
$K',K''\in\K(\HH)$ (possibly $K'=K''$) and compute $\max_{p\in
[0,1]}\min\left\{g_{K'}(p),g_{K''}(p)\right\}$.  This maximum is an
upper bound for $d^*(\HH)$.

\indent\textbf{Lower bound:} Let $p^*$ be the value of $p$ at which
the function $\min\left\{g_{K'}(p),g_{K''}(p)\right\}$ achieves its
maximum. For any $K\in\K(\HH)$, we try to show that $f_K(p^*)$ is at least the
upper bound value.  If this is the case, then we have computed
$d^*(\HH)$; moreover, $p^*(\HH)$ is the $p^*$ provided above.  In
order to do this, we use a type of weighted Tur\'an
theorem.
\end{method}

\subsection{The edit distance of $K_a+E_b$}
We give a class of graphs in which neither the upper nor the lower
bounds given by the binary chromatic number hold.
\begin{theorem}
Let $a\geq 2$ and $b\geq 1$ be positive integers.  Let $H=K_a+E_b$,
the disjoint union of an $a$-clique and a $b$-coclique. Then,
$$d^*\left(\forb(K_a+E_b)\right)  =  \frac{1}{a+b-1} \quad\text{ and }\quad
p^*\left(\forb(K_a+E_b)\right)  =  \frac{a-1}{a+b-1},
$$
i.e., $\dist(n,\forb(K_a\cup
E_b))=\frac{1}{a+b-1}\binom{n}{2}-o(n^2)$. \label{thm:kaeb}
\end{theorem}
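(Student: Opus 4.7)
The plan is to apply the Method described after Theorem~\ref{thm:fgthm}: exhibit two CRGs in $\K(\forb(K_a+E_b))$ whose $f$-functions pinch the upper bound at $p^*=(a-1)/(a+b-1)$, and verify the pointwise lower bound $f_K(p^*)\ge 1/(a+b-1)$ for every $K\in\K(\forb(K_a+E_b))$.

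For the upper bound, take $K'=K(a-1,0)$ (with $a-1$ white vertices and every edge gray) and $K''=K(0,b)$ (with $b$ black vertices and every edge gray). Both lie in $\K(\forb(K_a+E_b))$: a colored-homomorphism $K_a+E_b\to K'$ would have to embed $V(K_a)$ injectively into $V(K')$---no black vertex is available at which to identify two adjacent vertices of $K_a$---impossible when $|V(K')|=a-1<a$; a colored-homomorphism $K_a+E_b\to K''$ would have to send $V(E_b)$ to $b$ distinct vertices (no white vertex available) disjoint from the image of $V(K_a)$ (a shared image would need to be white), forcing $|V(K'')|\ge b+1$. Computing $f_{K'}(p)=p/(a-1)$ and $f_{K''}(p)=(1-p)/b$, these two lines meet at $p^*=(a-1)/(a+b-1)$ with common value $1/(a+b-1)$, so Theorem~\ref{thm:fgthm} yields
$$d^*\bigl(\forb(K_a+E_b)\bigr)\le\max_{p\in[0,1]}\min\{f_{K'}(p),f_{K''}(p)\}=\frac{1}{a+b-1}.$$

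For the lower bound, setting $k=|V(K)|$ and substituting $p^*=(a-1)/(a+b-1)$, the target $f_K(p^*)\ge 1/(a+b-1)$ rewrites as
$$(a-1)\bigl(\vws+2\ews\bigr)+b\bigl(\vbs+2\ebs\bigr)\ge k^2,\qquad(\star)$$
which I would prove by splitting on the vertex coloring of $K$. When both $\vws,\vbs\ge 1$, the colored-homomorphism sending $V(K_a)$ constantly to any black vertex and $V(E_b)$ constantly to any white vertex is a valid colored-homomorphism unless the cross-color edge between the chosen pair lies in $\eb$; so $K\in\K(\forb(K_a+E_b))$ forces every cross-color edge into $\eb$. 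A short analysis of the remaining colored-homomorphism types then shows that the images $\varphi_1(V(K_a))$ and $\varphi_2(V(E_b))$ must lie in the same vertex-color class, so the induced sub-CRGs $K|_{\vw}$ and $K|_{\vb}$ are themselves in $\K(\forb(K_a+E_b))$. Applying $(\star)$ to each of these (now single-color) sub-CRGs, summing, and accounting for the $k_Wk_B$ cross-color $\eb$-edges (each contributing $2b$ to the left side) yields $(a-1)(\vws+2\ews)+b(\vbs+2\ebs)\ge k_W^2+k_B^2+2k_Wk_B=k^2$. The two single-color subcases, $\vbs=0$ and $\vws=0$, are handled by the weighted generalization of Tur\'an's theorem alluded to in the abstract: the family of possible colored-homomorphisms $K_a+E_b\to K$ imposes quantitative restrictions on $\ews$, $\ebs$, $\egs$ that, after substitution, yield $(\star)$.

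The main obstacle is the weighted Tur\'an step in the single-color subcases. The naive guess---that, for all-white $K$, the graph $\eb\cup\eg$ must be $K_a$-free---is \emph{false}: an $a$-clique in $\eb\cup\eg$ need not extend to a colored-homomorphism $K_a+E_b\to K$, because the $E_b$-image must be $\ew\cup\eg$-adjacent to every $K_a$-image vertex (or, if reusing a clique vertex, every incident edge within the clique must be $\eg$). Extracting the correct quantitative inequality $(\star)$ from this more subtle structural condition is the technical heart of the proof.
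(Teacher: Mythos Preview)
Your outline matches the paper's proof. The upper bound uses exactly the CRGs $K(a-1,0)$ and $K(0,b)$; your lower-bound reduction to single-color sub-CRGs is equivalent to (and slightly tidier than) the paper's direct computation, and the fact that $K|_{\vw},K|_{\vb}\in\K(\forb(K_a+E_b))$ is immediate---any induced sub-CRG of a member of $\K(\HH)$ is again a member---so your ``short analysis'' is unnecessary.

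The gap you flag is the right one, and here is how the paper closes it. The all-black subcase needs only classical Tur\'an: since $K_a$ can collapse onto any single black vertex, $K_a+E_b\not\arrows K$ forces every $(b+1)$-subset of $V(K)$ to contain a black edge, whence $\ebs\ge\binom{k}{2}-t(k,b)$ and $(\star)$ follows in one line. The all-white subcase is the weighted Tur\'an statement (Lemma~\ref{lem:KaEb}): if every $a$-subset $A\subseteq V(K)$ either contains a white edge or has no vertex isolated in the black-edge graph on $A$, then
\[
(a-1)\,\ews+\ebs\ \ge\ \left\lceil \tfrac{k}{2}(k-a+1)\right\rceil.
\]
Your own colored-homomorphism analysis shows that an all-white $K\in\K(\forb(K_a+E_b))$ satisfies this hypothesis: an $a$-set $A$ violating both conditions has a vertex $v_1$ with all-gray edges into $A\setminus\{v_1\}$ and no white edge anywhere in $A$; map $E_b$ and one $K_a$-vertex to $v_1$ and the remaining $K_a$-vertices bijectively to $A\setminus\{v_1\}$. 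Since $b\ge 1$, the conclusion gives $(a-1)\ews+b\,\ebs\ge k(k-a+1)/2$, which is exactly $(\star)$ in this case. The lemma itself is proved by induction on $k$: take a maximal subset $V(K)\setminus S$ spanning no white edge; each $s\in S$ has at least one white and at most $a-2$ gray neighbors in $V(K)\setminus S$ (else $s$ together with $a-1$ gray neighbors violates both conditions), giving weight at least $|S|(k-|S|)$ across the cut, and recurse on $S$ and on $V(K)\setminus S$.
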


We note that $\chib(K_a+E_b)=\max\{a,b+1\}$ and so
Theorem~\ref{thm:kaeb} is an improvement over~\cite{AKM} in the case when $a\neq b+1$.  It is also an improvement over
Proposition~\ref{prop:UB}, which appears below, in the case when $b>1$ and $a>2$.   Alon and Stav~\cite{AS2} prove the case when $a=3$ and $b=1$, the complement of the ``claw,'' $K_{1,3}$.

\subsection{A few specific graphs}
In all known examples of hereditary properties $\HH$, the point at
which $\left(p^*(\HH),d^*(\HH)\right)$ occurs is either the
intersection of two curves $g_{K'}(p)$, $g_{K''}(p)$ or is the maximum
of a single curve $g_{K'}(p)$.  In either case, each CRG can be
chosen to be one with only gray edges.

We compute the edit distance of two hereditary properties that
demonstrate the complexity of both $p^*$ and $d^*$.

\subsubsection{The graph $K_{3,3}$}
The graph $K_{3,3}$ has $d^*$ and $p^*$ defined by the local maximum
of a single curve $g_{K'}(p)$.
\begin{theorem}
The complete bipartite graph $K_{3,3}$ satisfies
$$ p^*\left(\forb(K_{3,3})\right)  =  \sqrt{2}-1 \quad\text{and}\quad
d^*\left(\forb(K_{3,3})\right)  =  3-2\sqrt{2} .$$ Moreover, $p^*$
is the local maximum of $g_{K'}(p)$, where $K'$ consists of one
white vertex, two black vertices and all gray edges. \label{thm:k33}
\end{theorem}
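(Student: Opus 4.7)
My plan follows the Method described before the theorem: pick a candidate CRG to give the upper bound and then show that no CRG in $\K(\forb(K_{3,3}))$ beats it at the critical value $p^* = \sqrt{2}-1$. For the upper bound I take $K'$ to be the CRG with one white vertex, two black vertices, and all edges gray. First I verify $K' \in \K(\forb(K_{3,3}))$: a colored-homomorphism $\varphi\colon K_{3,3} \to K'$ would partition $V(K_{3,3})$ into a coclique (the preimage of the white vertex) and two cliques (the preimages of the black vertices), but cliques in $K_{3,3}$ have at most two vertices, one from each part, and cocliques lie entirely in one part; a short case check on the distributions over the three vertices of $K'$ rules out every such $\varphi$. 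Then I form $M_{K'}(p) = \text{diag}(p, 1-p, 1-p)$ and apply Lemma~\ref{lem:glemma} to compute
\[ g_{K'}(p) = \frac{1}{\one^T M_{K'}^{-1}(p)\one} = \frac{1}{1/p + 2/(1-p)} = \frac{p(1-p)}{1+p}. \]
The only critical point in $(0,1)$ comes from $p^2 + 2p - 1 = 0$, so $p^* = \sqrt{2}-1$ and $g_{K'}(p^*) = 3 - 2\sqrt{2}$. Theorem~\ref{thm:fgthm} then yields $d^*(\forb(K_{3,3})) \leq \max_p g_{K'}(p) = 3-2\sqrt{2}$.

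For the matching lower bound I must show $g_K(p^*) \geq 3 - 2\sqrt{2}$ for every $K \in \K(\forb(K_{3,3}))$. By Lemma~\ref{lem:glemma}, after restricting to the induced sub-CRG $K^*$ coming from the nonzero entries of the optimizer, this becomes $\one^T M_{K^*}^{-1}(p^*) \one \leq 3 + 2\sqrt{2}$. I will extract structural consequences of $K_{3,3} \not\arrows K$: for instance, any two distinct white vertices of $K$ must be joined by an $\ew$-edge (otherwise sending one part of $K_{3,3}$ to each white vertex is a valid colored-homomorphism); each black vertex can absorb at most one vertex from each part of $K_{3,3}$; and an $\eb$-edge between $u$ and $v$ forces $\varphi^{-1}(u)$ and $\varphi^{-1}(v)$ to lie in opposite parts of the bipartition. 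Combining these restrictions with the weighted Tur\'an--type inequalities developed elsewhere in the paper for the quadratic form $\uu^T M_K(p) \uu$, the infimum of $g_K(p^*)$ should be attained on gray-only CRGs $K(a,c)$. A direct computation gives
\[ g_{K(a,c)}(p^*) = \frac{1}{a(\sqrt{2}+1) + c(1 + \sqrt{2}/2)}, \]
and a short argument shows that among $(a,c)$ for which $K(a,c) \in \K(\forb(K_{3,3}))$ (i.e., $K_{3,3}$ does not admit a partition into $a$ cocliques and $c$ cliques), the pair $(a,c) = (1,2)$ is the unique minimizer, with value $3 - 2\sqrt{2}$.

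The principal obstacle is this last lower bound, specifically ruling out CRGs with $\ew$ or $\eb$ edges that might push $g_K(p^*)$ below $3 - 2\sqrt{2}$. A priori $\K(\forb(K_{3,3}))$ is an infinite family with arbitrary edge colorings, so the crux is a reduction showing that non-gray edges cannot beat $K(1,2)$ at $p^* = \sqrt{2}-1$. This reduction is exactly what the weighted Tur\'an machinery of the paper is designed for: under the structural constraints above, a white or black edge contributes to $\one^T M_K^{-1}(p^*) \one$ no less than the corresponding gray edge would, so the optimizer migrates to a gray-only CRG, where the finite enumeration just described completes the bound. Together with the upper-bound computation, this yields $d^*(\forb(K_{3,3})) = 3 - 2\sqrt{2}$ and identifies $p^*(\forb(K_{3,3})) = \sqrt{2}-1$ as the location of the local maximum of $g_{K'}(p)$.
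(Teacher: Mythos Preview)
Your upper bound matches the paper exactly. The lower bound, however, has a genuine gap: the claimed reduction to gray-only CRGs is neither proved nor how the paper proceeds. Your assertion that ``a white or black edge contributes to $\one^T M_K^{-1}(p^*)\one$ no less than the corresponding gray edge would'' is not a monotonicity that follows from anything in the paper, and the only weighted Tur\'an lemma developed elsewhere (Lemma~\ref{lem:KaEb}) is tailored to $K_a+E_b$ and is not invoked for $K_{3,3}$. More fundamentally, replacing a non-gray edge of $K$ by a gray one can push $K$ out of $\K(\forb(K_{3,3}))$, so ``migrating to a gray-only CRG'' is not a legal move; conversely, the gray-only CRGs $K(a,c)$ that remain in $\K(\forb(K_{3,3}))$ are few, but there is no a priori reason the infimum over \emph{all} admissible $K$ is realized among them.

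The paper's lower bound is a direct estimate of $f_K(p^*)$ for an arbitrary $K\in\K(\forb(K_{3,3}))$, not a reduction. It uses three structural facts forced by $K_{3,3}\not\arrows K$: (i) the edges inside $\vw(K)$ are all white; (ii) any white/gray triangle inside $\vb(K)$ forces a white edge from each $w\in\vw(K)$ to that triangle, which is encoded via a minimum transversal $C\subseteq\vb(K)$ of such triangles; (iii) inside $\vb(K)$ there is no gray triangle, and inside $\vb(K)\setminus C$ no white-or-gray triangle, so Tur\'an with $k=2$ bounds the black-edge count. Summing these contributions gives a quadratic in $|C|$, $\vwks$, $\vbks$; optimizing over $|C|$ and then splitting into the two regimes $\vbks\lessgtr\frac{2p^*}{1-2p^*}\vwks$ yields $f_K(p^*)\geq\frac{p^*(1-p^*)}{1+p^*}=3-2\sqrt{2}$. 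This bespoke analysis, especially the introduction of the cover set $C$ and the two-case optimization, is the missing idea in your proposal.
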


It should be noted that neither $p^*$ nor $d^*$ could be determined
for this hereditary property by the intersection of a finite number
of $f$ curves, simply because such intersections would occur at
rational points.  So, a sequence of CRGs would be required.  By
using the $g$ curves, however, we need only to use a single
CRG.

\subsubsection{The graph $H_9$}
Here, the graph we construct is formed by taking $C_9^2$ and adding
a triangle.  That is, if the vertices are $\{0,1,2,3,4,5,6,7,8\}$,
then $i\sim j$ iff $i-j\in\{\pm 1,\pm 2\}\pmod{9}$ or both $i$ and $j$ are congruent to 0 modulo $3$. For notational
simplicity, we call this graph $H_9$. See Figure~\ref{fig:h9}.
\begin{center}
\begin{figure}[ht]
   \centerline{\epsfig{file=./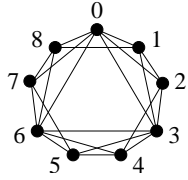,scale=0.5}}
   \caption{The graph $H_9$.} \label{fig:h9}
\end{figure}
\end{center}

An upper bound on $d^*(\forb(H_9))$ is defined by the intersection of
two curves, $g_{K'}(p)$, $g_{K''}(p)$, one of which corresponds to a
CRG that has one black edge.  For this graph $H_9$, it is impossible
to only consider CRGs which have all edges gray. It was a folklore
belief that for every graph it is sufficient to consider CRGs which
have all edges gray, $H_9$ is the first example showing that this
belief is false.
\begin{theorem}
The graph $H_9$ satisfies
\begin{equation*}
d^*\left(\forb(H_9)\right) \leq \frac{3-\sqrt{5}}{4} .
\end{equation*}
Moreover, this value occurs at the intersection of $g_{K'}(p)$ and
$g_{K''}(p)$, where $K'$ consists of two black vertices and a gray
edge and $K''$ consists of four white vertices, a black edge and 5
gray edges. \label{thm:h9}
\end{theorem}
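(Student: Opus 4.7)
The plan is to apply Theorem~\ref{thm:fgthm}, which gives the upper bound $d^*(\forb(H_9)) \leq \max_{p \in [0,1]} \min\{g_{K'}(p), g_{K''}(p)\}$ for any two CRGs $K', K'' \in \K(\forb(H_9))$. I take $K'$ and $K''$ as in the statement and split the work into (i) verifying that each CRG lies in $\K(\forb(H_9))$, (ii) computing the $g$-curves via Lemma~\ref{lem:glemma}, and (iii) locating the intersection of the two curves.

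A colored-homomorphism $H_9 \to K'$ is the same thing as a partition $V(H_9) = A_1 \dotcup A_2$ into two cliques of $H_9$; since the maximum clique of $H_9$ has size $4$ (the triangle on $\{0,3,6\}$ extends a consecutive triple such as $\{0,1,2\}$ to $\{0,1,2,3\}$, and no $5$-clique exists) and $9 > 2 \cdot 4$, no such partition exists. A colored-homomorphism $H_9 \to K''$ is a $4$-partition $V(H_9) = A_1 \dotcup A_2 \dotcup A_3 \dotcup A_4$ into independent sets of $H_9$ with the pair $(A_3, A_4)$ inducing a complete bipartite subgraph. Three structural facts about $H_9$ drive the verification that no such partition exists: $\chi(H_9) = 4$, so none of the $A_i$ can be empty; the only size-$3$ independent sets are $\{1,4,7\}$ and $\{2,5,8\}$ (three-element subsets of $C_9^2$ with pairwise cyclic distance $\geq 3$ must form an arithmetic progression of common difference $3$, and the third such triple $\{0,3,6\}$ is a triangle in $H_9$); and neither of these triples has a common neighbor in $H_9$, so $|A_3|, |A_4| \leq 2$. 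Consequently $|A_1| + |A_2| \geq 5$, so some $A_i$, say $A_1$, equals one of $\{1,4,7\}$ or $\{2,5,8\}$. A short case analysis finishes: if $A_2$ is the other size-$3$ set, the remaining vertices $\{0,3,6\}$ form a triangle that cannot be split into two nonempty independent parts; otherwise the $6$-vertex induced subgraph on $V(H_9) \setminus A_1$ must be $3$-partitioned into independent sets with two of the parts complete bipartite, and checking the feasible size distributions (only the $(2,2,2)$ pattern such as $\{0,5\},\{2,6\},\{3,8\}$ when $A_1 = \{1,4,7\}$ is possible) one sees that none of the three pairings spans a complete bipartite subgraph of $H_9$.

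For the $g$-curves, Lemma~\ref{lem:glemma} reduces each to a quadratic program. For $K'$, $M_{K'}(p) = (1-p)I_2$ immediately gives $g_{K'}(p) = (1-p)/2$. For $K''$, $M_{K''}(p) = pI_4 + (1-p)B$, where $B$ is the adjacency matrix of the lone black edge; the symmetric Lagrangian critical point $u_1 = u_2 = 1/(2(1+p))$, $u_3 = u_4 = p/(2(1+p))$ is nonnegative for $p \geq 1/2$ and yields $g_{K''}(p) = p/(2(1+p))$. Solving $(1-p)/2 = p/(2(1+p))$ gives $p^2 + p - 1 = 0$, so $p^* = (\sqrt{5}-1)/2 > 1/2$ and the common value is $(3-\sqrt{5})/4$. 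Because $g_{K'}$ is strictly decreasing and $g_{K''}$ is strictly increasing on the relevant range, this is exactly $\max_p \min\{g_{K'}(p), g_{K''}(p)\}$, completing the upper bound.

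I expect the main difficulty to be the second half of step (i), namely the structural argument that $H_9 \not\arrows K''$: the case analysis uses all three facts about $H_9$ in a nontrivial way, and in particular the necessity of the black edge in $K''$ is what makes this graph the first known example where all-gray CRGs do not suffice to witness the edit distance. The clique-partition argument for $K'$, the quadratic-program computations, and the intersection calculation are routine by comparison.
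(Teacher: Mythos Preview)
Your proposal is correct and follows essentially the same route as the paper: verify $K',K''\in\K(\forb(H_9))$ by the clique/coclique structure of $H_9$, compute $g_{K'}(p)=(1-p)/2$ and $g_{K''}(p)=p/(2(1+p))$ via Lemma~\ref{lem:glemma}, and intersect them at $p^*=(\sqrt{5}-1)/2$. Your organization of the $K''$ argument (bounding $|A_3|,|A_4|\le 2$ via the ``no common neighbor'' observation, then forcing the unique $(3,2,2,2)$ partition) is a mild rephrasing of the paper's, which instead notes that using both $3$-cocliques forces five parts; the case analyses coincide thereafter.

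One small imprecision worth tightening: the Lagrangian critical point for $K''$ is in fact nonnegative for \emph{all} $p\in[0,1]$, not only $p\ge 1/2$; what distinguishes $p\ge 1/2$ is that $\M_{K''}(p)$ is positive semidefinite there, so the critical point is the global minimum and $g_{K''}(p)=p/(2(1+p))$ exactly. For $p<1/2$ this equality need not hold, but the feasible point still gives $g_{K''}(p)\le p/(2(1+p))<1/6<(3-\sqrt{5})/4$, which is all you need for the upper bound. With that clarification the argument is complete.
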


In the proof, we show that if only gray-edge CRGs are used, then the
upper bound on $d^*$ could be no less than $1/5=0.2$, but
$\frac{3-\sqrt{5}}{4}\approx 0.191$.  The lower bound, from Theorem~\ref{thm:AKM}, is $d^*(\forb(H_9))\geq 1/6\approx 0.167$.

\subsection{4-vertex graphs}
In~\cite{AS2}, Alon and Stav compute
$\left(p^*(\forb(H)),d^*(\forb(H))\right)$ for all $H$ on at most 4
vertices.  Except for $P_3+K_1$ and its complement, all such graphs
$H$ are either covered by Theorem~\ref{thm:cnaught} (see
also~\cite{AKM}) or are of the form $K_a+E_b$ or
$\overline{K_a+E_b}$, which is covered by Theorem~\ref{thm:kaeb}. Here we give a short and different proof, using Lemma~\ref{lem:KaEb}, for $\overline{P_3+K_1}$, which consists of a triangle and a pendant edge.
\begin{theorem}
The graph $\overline{P_3+K_1}$ satisfies
$$
p^*\left(\forb(\overline{P_3+K_1})\right)=2/3\qquad\mbox{and}\qquad d^*\left(\forb(\overline{P_3+K_1})\right)=1/3 .
$$
\label{thm:p3k1}
\end{theorem}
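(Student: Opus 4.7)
The plan is to apply the Method outlined above. For the upper bound, I would take the CRG $K'=K(2,0)$, two white vertices joined by a gray edge, and $K''=K(0,1)$, a single black vertex. A short verification shows that $\overline{P_3+K_1}$ admits no colored-homomorphism to either: the triangle $\{1,3,4\}$ requires three pairwise-distinct gray-adjacent images, which $K'$ does not provide, while the two non-edges $12$ and $23$ cannot both be realised at the single black vertex of $K''$. Solving the quadratic program~\eqref{QP} for these CRGs yields $g_{K'}(p)=p/2$ and $g_{K''}(p)=1-p$, which meet at $p=2/3$ with common value $1/3$. By Theorem~\ref{thm:fgthm}, this gives $d^*(\forb(\overline{P_3+K_1}))\le 1/3$, and will identify $p^*=2/3$ once the matching lower bound is in hand.

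For the lower bound, the plan is to show that $g_K(2/3)\ge 1/3$ for every $K\in\K(\forb(\overline{P_3+K_1}))$, whereupon the theorem follows. By Lemma~\ref{lem:glemma} it suffices to prove this on induced sub-CRGs $K^*$ of $K$ at which the quadratic program attains an interior optimum; each such $K^*$ itself forbids the paw. I would then split into two cases. If $K_3+E_1\not\arrows K^*$, Lemma~\ref{lem:KaEb}, applied with $a=3$ and $b=1$, yields $g_{K^*}(2/3)\ge 1/3$ directly. In the residual case $K_3+E_1\arrows K^*$ but $\overline{P_3+K_1}\not\arrows K^*$, the failure of extension must come from the pendant edge $24$: no vertex of $K^*$ can play the pendant's role while simultaneously respecting the non-edges $12$, $23$ and the new edge $24$. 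I would translate this obstruction into a local structural constraint on $K^*$, forcing a small sub-CRG such as the one on two black vertices joined by a white edge, and verify $g_{K^*}(2/3)\ge 1/3$ on the resulting short list by direct computation.

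The hardest step will be this residual case. The classes $\K(\forb(K_3+E_1))$ and $\K(\forb(\overline{P_3+K_1}))$ are genuinely incomparable: the two-vertex CRG consisting of two black vertices joined by a white edge forbids the paw (any paw-map forces all four paw-vertices onto a single black, contradicting the non-edges) but admits $K_3+E_1$. So Lemma~\ref{lem:KaEb} cannot be quoted off the shelf. What rescues the argument is that the CRGs in the symmetric difference appear to be tight—the two-vertex example above satisfies $g_{K^*}(2/3)=1/3$ exactly—so the technical content is to enumerate these minimal configurations tightly enough to rule out any $K^*$ with $g_{K^*}(2/3)<1/3$.
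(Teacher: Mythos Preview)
Your upper bound matches the paper's exactly. For the lower bound, however, you take a detour that leaves a genuine gap: your residual case, where $K_3+E_1\arrows K^*$ but the paw does not, is never resolved. You exhibit one tight example and propose to ``enumerate minimal configurations,'' but give no argument that this list is finite, no mechanism for producing it, and no way to certify $g_{K^*}(2/3)\ge 1/3$ across it. (A minor point: Lemma~\ref{lem:KaEb} has no parameter $b$; what you are really invoking in your first branch is the entire lower-bound computation behind Theorem~\ref{thm:kaeb}.)

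The paper avoids the case split altogether by a structural observation you are missing: if $\overline{P_3+K_1}\not\arrows K$, then \emph{no gray edge of $K$ is incident to a black vertex}. Indeed, given such a gray edge, map the whole triangle to the black endpoint and the pendant to the other end. Hence every edge touching $\vbk$ is black or white and contributes at least $\min\{p^*,1-p^*\}=\tfrac13$ per edge to $p^*\ewks+(1-p^*)\ebks$. Inside the white sub-CRG $\kw$, any three vertices must contain a white edge or at least two black edges---otherwise the paw maps in with the pendant and one non-adjacent triangle vertex sharing the white vertex incident to both gray edges---and this is exactly the hypothesis of Lemma~\ref{lem:KaEb} with $a=3$, yielding $2\,|\ew(\kw)|+|\eb(\kw)|\ge \vwks(\vwks-2)/2$. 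Summing these contributions gives $f_K(2/3)\ge 1/3$ for every $K\in\K(\forb(\overline{P_3+K_1}))$ directly, with no residual case to enumerate.
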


\section{Basic tools}
\subsection{Improved binary chromatic number bounds}
Lemma~\ref{lem:glemma}, along with Theorem~\ref{thm:fgthm}, yields a
proof of a somewhat better upper bound for $\dist(n,\HH)$, based on the binary chromatic
number.

Recall that $K(a,c)$ denotes the CRG that consists of $a$ white
vertices, $c$ black vertices and only gray edges.  If $k=\chib(\HH)-1$, then let
$\cmin$ be the least $c$ so that $K(k-c,c)\not\in\K(\HH)$. Let
$\cmax$ be the greatest such number. For $\HH=\forb(H)$, there
exists an upper bound that can be expressed in terms of the binary
chromatic number of $H$ and corresponding $\cmin$ and $\cmax$.
\begin{theorem}[\cite{AKM}]
Let $H$ be a graph with binary chromatic number $k+1$ and $\cmin$
and $\cmax$ be defined as above.  If $\cmin\leq k/2\leq\cmax$, then
$$ d^*(\forb(H))=\frac{1}{2k} . $$
Otherwise, let $c_0$ be the one of $\{\cmax,\cmin\}$ that is closest
to $k/2$.  Then
$$ d^*(\forb(H))\leq
   \left(\frac{1}{1+2\sqrt{\frac{c_0}{k}
                           \left(1-\frac{c_0}{k}\right)}}\right)
   \frac{1}{k}\leq\frac{1}{k} . $$
\label{thm:cnaught}
\end{theorem}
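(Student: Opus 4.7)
The plan is to apply Theorem~\ref{thm:fgthm}, which reduces the computation of $d^*(\forb(H))$ to the quadratic-program quantity $\max_{p\in[0,1]}\inf_{K\in\K(\forb(H))}g_K(p)$, and to exhibit explicit gray-edge CRGs that realize the stated upper bounds. For the gray-edge CRG $K(a,c)$, the matrix $\M_{K(a,c)}(p)$ is diagonal with $a$ entries equal to $p$ and $c$ entries equal to $1-p$, so Lemma~\ref{lem:glemma} gives at once
\[
g_{K(a,c)}(p)=\frac{p(1-p)}{a(1-p)+cp}.
\]
A short calculus exercise shows this is maximized at $p^{*}=\sqrt{a}/(\sqrt{a}+\sqrt{c})$ with value $1/(\sqrt{a}+\sqrt{c})^{2}$, and that $g_{K(a,c)}(1/2)=1/[2(a+c)]$, independently of the split between $a$ and $c$.

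For the first case ($\cmin\leq k/2\leq\cmax$) I would test the two CRGs $K_{1}:=K(k-\cmin,\cmin)$ and $K_{2}:=K(k-\cmax,\cmax)$, both of which lie in $\K(\forb(H))$ by the definitions of $\cmin$ and $\cmax$. Setting $a+c=k$ in the formula above yields $g_{K_{1}}(1/2)=g_{K_{2}}(1/2)=1/(2k)$. Equating $g_{K_{1}}(p)=g_{K_{2}}(p)$, the common factor $p(1-p)$ cancels and one is left with $(\cmin-\cmax)(2p-1)=0$, so $p=1/2$ is the unique crossing on $(0,1)$. Since $\cmin\leq k/2$ places the maximum of $g_{K_{1}}$ at some $p_{1}^{*}\geq 1/2$ and $\cmax\geq k/2$ places the maximum of $g_{K_{2}}$ at some $p_{2}^{*}\leq 1/2$, the pointwise minimum $\min\{g_{K_{1}}(p),g_{K_{2}}(p)\}$ is maximized exactly at $p=1/2$ with value $1/(2k)$. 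Theorem~\ref{thm:fgthm} then gives $d^{*}(\forb(H))\leq 1/(2k)$, and Theorem~\ref{thm:AKM} supplies the matching lower bound.

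For the second case, suppose without loss of generality that $\cmax<k/2$, so $c_{0}=\cmax$. I would use the single CRG $K_{0}:=K(k-c_{0},c_{0})\in\K(\forb(H))$. From the computation above, $\max_{p}g_{K_{0}}(p)=1/(\sqrt{k-c_{0}}+\sqrt{c_{0}})^{2}$, and rewriting $(\sqrt{k-c_{0}}+\sqrt{c_{0}})^{2}=k+2\sqrt{c_{0}(k-c_{0})}=k\bigl[1+2\sqrt{(c_{0}/k)(1-c_{0}/k)}\bigr]$ puts this in the exact form displayed in the theorem; the cruder bound $\leq 1/k$ is immediate. The symmetric case $\cmin>k/2$ is identical with $c_{0}=\cmin$.

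The main technical content is routine: the algebraic maximization of $g_{K(a,c)}(p)$ and the check that each such curve is unimodal on $(0,1)$, which is what allows the crossing argument in the first case to identify the max-of-min at the single point $p=1/2$. The only other point requiring care is verifying that the chosen CRGs really lie in $\K(\forb(H))$, which is precisely the content of the definitions of $\cmin$ and $\cmax$ (equivalently, that $V(H)$ admits no partition into $k-\cmin$ cocliques and $\cmin$ cliques, and analogously for $\cmax$). No direct appeal to Szemer\'edi's Regularity Lemma is needed, as everything is delegated to Theorem~\ref{thm:fgthm}.
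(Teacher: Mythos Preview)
Your proposal is correct and follows essentially the same route as the paper. Theorem~\ref{thm:cnaught} is cited from~\cite{AKM}, but the paper reproves it inside the proof of Proposition~\ref{prop:UB}: one restricts to the gray-edge CRGs $K(a,c)$, uses the formula $g_{K(a,c)}(p)=p(1-p)/[a(1-p)+cp]$, and in the first case compares two such curves on the line $a+c=k$, while in the second case maximizes a single one. The only cosmetic difference is that, for the first case, the paper shows $g_{K(a,C)}(p)>g_{K(A,c)}(p)$ for $p<1/2$ (and the reverse for $p>1/2$) by a direct chain of inequalities, whereas you locate the unique interior crossing at $p=1/2$ and then invoke unimodality together with the positions of the individual maxima; these are equivalent arguments.

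One small caution on wording: the paper literally defines $\cmin$ as the least $c$ with $K(k-c,c)\not\in\K(\HH)$, but in the proof of Proposition~\ref{prop:UB} the parameter $c_0$ is explicitly taken so that $K(k-c_0,c_0)\in\K(\HH)$. Your reading---that the relevant CRGs $K(k-\cmin,\cmin)$ and $K(k-\cmax,\cmax)$ lie in $\K(\forb(H))$---is the intended one and is what makes the argument go through; just be aware of the apparent discrepancy in the stated definition.
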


Proposition~\ref{prop:UB} improves this general upper bound, not
only trivially by extending it to general hereditary properties, but
also by improving the case when $\cmax=0$ or $\cmin=k$.

\begin{proposition}
Let $\HH$ be a hereditary property with $k+1=\chib(\HH)$ and
$c_0,\cmax,\cmin$ defined analogously to Theorem~\ref{thm:cnaught}.
The bounds in Theorem~\ref{thm:cnaught} hold for $\HH$. Furthermore,
if $\HH\neq\forb(K_{k+1})$, then
$$ d^*(\HH)\leq\frac{1}{k+1} . $$
\label{prop:UB}
\end{proposition}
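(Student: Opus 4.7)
The plan is to handle the two parts of the Proposition in sequence. For the first part, I would carry over the argument of Theorem~\ref{thm:cnaught} to arbitrary hereditary properties using the CRG framework of Theorem~\ref{thm:fgthm}. The key observation is that $K_1 := K(k-\cmin,\cmin)$ and $K_2 := K(k-\cmax,\cmax)$ both lie in $\K(\HH)$ by definition of $\cmin$ and $\cmax$, so for any $p$ we have $g(p) \leq \min(g_{K_1}(p), g_{K_2}(p)) \leq \min(f_{K_1}(p), f_{K_2}(p))$, the last inequality because $g_K \leq f_K$ (as the quadratic program defining $g_K$ admits the equipartition as a feasible point giving value $f_K(p)$). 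A direct computation using that $f_{K_i}(p) = [p(k-c_i)+(1-p)c_i]/k^2$ is linear with slope $(k-2c_i)/k^2$ shows that when $\cmin \leq k/2 \leq \cmax$ the two lines meet at $p=1/2$ with common value $1/(2k)$, hence $d^*(\HH)\leq 1/(2k)$. In the complementary case, say $\cmax < k/2$, using $K_2$ alone and the gray-CRG formula $g_{K(a,c)}(p) = p(1-p)/((a)(1-p)+cp)$ (from Lemma~\ref{lem:glemma} applied to the diagonal matrix $M_{K(a,c)}$) gives $\max_p g_{K_2}(p) = 1/(\sqrt{k-\cmax}+\sqrt{\cmax})^2 = 1/(k+2\sqrt{c_0(k-c_0)})$.

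For the improved bound in the second part, I would exhibit a CRG $K^*\in\K(\HH)$ with $\max_p g_{K^*}(p) \leq 1/(k+1)$, which combined with the first part yields the result. The natural candidate is the $(k+1)$-vertex CRG $K^*$ consisting of $k+1$ white vertices with a single black edge (between, say, $v_1,v_2$) and all other edges gray. A Lemma~\ref{lem:glemma} computation shows $M_{K^*}(p)$ is block diagonal with one $2\times 2$ block $\bigl(\begin{smallmatrix}p & 1-p \\ 1-p & p\end{smallmatrix}\bigr)$ contributing $2$ to $\mathbf{1}^T M_{K^*}^{-1}(p)\mathbf{1}$ and $k-1$ scalar entries $p$ contributing $1/p$ each, so $\mathbf{1}^T M_{K^*}^{-1}(p)\mathbf{1} = 2 + (k-1)/p$ and therefore $g_{K^*}(p) = p/(2p+k-1)$. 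This function is increasing in $p$ and attains maximum $1/(k+1)$ at $p=1$; a separate check confirms that for $p\leq 1/2$ the boundary minimum of the QP does not exceed this value.

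The main obstacle is verifying $K^*\in\K(\HH)$: a coloured homomorphism $H\arrows K^*$ corresponds to a $(k+1)$-colouring of $V(H)$ in which the two colour classes sent to the endpoints of the black edge of $K^*$ form a complete bipartite subgraph. For $H=K_{k+1}$ any $(k+1)$-colouring consists of singletons and every singleton pair is a $K_2$, so $K_{k+1}\arrows K^*$; this is why the hypothesis $\HH\neq\forb(K_{k+1})$ is needed. I would argue that when $\HH\neq\forb(K_{k+1})$ (the relevant case is $\cmax=0$, where the first-part bound is only $1/k$), one can replace $K^*$ by a related CRG on $k+1$ whites carrying instead a matching of $t$ black edges; the same block-diagonal argument gives $g(p) = p/(2tp + k+1-2t)$ with maximum $1/(k+1)$ whenever $t \leq \lceil (k+1)/2\rceil$, and choosing the matching to cover every vertex of the CRG blocks the homomorphism $K_{k+1}+E_b\arrows K^*$ used in the $\forb(K_{k+1})$ case. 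The combinatorial verification that for every $\HH\neq\forb(K_{k+1})$ some such matching structure simultaneously blocks all $H\in\F(\HH)$ is the delicate step, handled by a case analysis based on whether $K_{k+1}\in\F(\HH)$ and on the vertex-count/chromatic profile of the other forbidden graphs forced by $\chib(\HH)=k+1$.
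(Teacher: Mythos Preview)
Your treatment of the first part (carrying the Theorem~\ref{thm:cnaught} bounds over to general $\HH$) is essentially the paper's argument: both restrict to the all-gray CRGs $K(k-c,c)$ and use the closed form $g_{K(a,c)}(p)=p(1-p)/(a(1-p)+cp)$. The paper works with $g$ throughout rather than first passing through $f$, but that is cosmetic.

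For the improved bound $d^*(\HH)\leq 1/(k+1)$, however, you diverge from the paper and your route has a genuine gap. Your candidate $K^*$ has $k+1$ white vertices, some black edges, and the remaining edges gray. But \emph{any} such CRG admits a colored-homomorphism from $K_{k+1}$: send the $k+1$ vertices bijectively to the white vertices of $K^*$; each preimage is a singleton (hence independent), pairs mapped to gray edges are unconstrained, and pairs mapped to black edges are complete bipartite because any two vertices of $K_{k+1}$ are adjacent. Thus whenever $K_{k+1}\in\F(\HH)$ --- which can certainly happen with $\HH\neq\forb(K_{k+1})$, for instance $\HH=\forb(K_{k+1})\cap\forb(H')$ --- your $K^*$ fails to lie in $\K(\HH)$, and no choice of black matching repairs this. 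The ``delicate case analysis'' you defer cannot succeed along these lines.

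The paper's argument for the $c_0=0$ case is both simpler and stays entirely within the all-gray world. It observes that if \emph{some} $K(\alpha,\gamma)\in\K(\HH)$ with $\gamma\geq 1$ exists, then intersecting $g_{K(\alpha,\gamma)}$ with $g_{K(k,0)}$ already yields $\frac{k-\alpha}{k(k-\alpha+\gamma)}\leq\frac{1}{k+1}$; if no such $K(\alpha,\gamma)$ exists, one argues directly that $\HH=\forb(K_{k+1})$. So instead of manufacturing a nongray CRG, you should look for an all-gray CRG with at least one black vertex already sitting in $\K(\HH)$ and pair its $g$-curve with that of $K(k,0)$.
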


Note that $d^*(\forb(K_{k+1}))=\frac{1}{k}$ by Tur\'an's theorem. \\

\begin{proof}
If we restrict our attention to the CRGs in $\K(\HH)$ which are of the
form $K(a,c)$, then Theorem~\ref{thm:fgthm} gives that
$$ d^*(\HH)\leq\max_{p\in [0,1]}
   \inf_{K(a,c)\in\K(\HH)}\left\{g_{K(a,c)}(p)\right\}
   =\max_{p\in [0,1]}\min_{K(a,c)\in\K(\HH)}
    \left\{\frac{p(1-p)}{a(1-p)+cp}\right\} . $$

A word on why the ``$\inf$'' is made into a ``$\min$'':  Recall
that if $\HH=\bigcap_{H\in\F(\HH)}\forb(H)$, then $K\in\K(\HH)$
means that $H\not\arrows K$ for all $H\in\F(\HH)$.  Choose some
$H_0\in\F(\HH)$.  In order for $K\in\K(\HH)$, it must be the case
that $H_0\not\arrows K$.  But, there are only a finite number of
pairs $(a,c)$ such that $H_0\not\arrows K(a,c)$.  Indeed,
$H_0\arrows K(a,c)$ if either $a\geq\chi(H_0)$ or
$c\geq\chi(\overline{H_0})$.  Therefore, regardless of $\HH$, there
are only a finite number of $(a,c)$ for which $K(a,c)\in\K(\HH)$.

Suppose there exist different pairs $(a,C)$ and $(A,c)$ such that
$a+C=A+c=k$ and $c\leq k/2\leq C$.  We bound $d^*(\HH)$ by
$\max\limits_{p\in [0,1]}\min\left\{g_{K(a,C)}(p),
g_{K(A,c)}(p)\right\}$. If $p<1/2$, then
\begin{eqnarray*}
   C(1-2p) & > & c(1-2p) \\
   -C(1-p)+Cp & < & -c(1-p)+cp \\
   (k-C)(1-p)+Cp & < & (k-c)(1-p)+cp \\
   a(1-p)+Cp & < & A(1-p)+cp \\
   g_{K(a,C)}(p) & > & g_{K(A,c)}(p) .
\end{eqnarray*}
Similarly, if $p>1/2$, then $g_{K(a,C)}(p)<g_{K(A,c)}(p)$.  So,
$\max\limits_{p\in [0,1]}\min\left\{g_{K(a,C)}(p),
g_{K(A,c)}(p)\right\}$ occurs at the intersection of the two curves,
which is $\left(1/2,1/(2k)\right)$.

Otherwise, let $c_0$ be the value of $c$ for which $K(k-c,c)\in\K(\HH)$
that is the closest to $k/2$. Without loss of generality, assume that
$c_0=c<k/2$.  If $c_0>0$, then $k\geq 3$ and we may bound $d^*(\HH)$
by $g_{K(k-c_0,c_0)}(p)$, which achieves its maximum at
$p=\frac{\sqrt{k-c_0}}{\sqrt{k-c_0}+\sqrt{c_0}}$ and this maximum is
\begin{eqnarray*}
   \frac{1}{\left(\sqrt{k-c_0}+\sqrt{c_0}\right)^2}
   & = & \left(\frac{1}{1+2\sqrt{\frac{c_0}{k}
                          \left(1-\frac{c_0}{k}\right)}}\right)
         \frac{1}{k} \\
   & \leq & \left(\frac{1}{1+2\sqrt{\frac{1}{k}
                            \left(1-\frac{1}{k}\right)}}\right)
            \frac{1}{k}=\frac{1}{k+2\sqrt{k-1}}
   <\frac{1}{k+1} .
\end{eqnarray*}

Finally, consider the case when $c_0=0$.  If there exists some
$K(\alpha,\gamma)\in\K(\HH)$ with $\gamma\geq 1$, then
$g_{K(\alpha,\gamma)}(p)$ intersects $g_{K(k,0)}(p)$ at
$p=\frac{k-\alpha}{k-\alpha+\gamma}$ and the value of each function
at that point is
$\frac{k-\alpha}{k(k-\alpha+\gamma)}\leq\frac{1}{k+1}$, which has
equality only if $\gamma=1$ and $\alpha=0$.  If there is no such $K(\alpha,\gamma)$, then,
with $\HH=\bigcap_{H\in\F(\HH)}\forb(H)$, each $H\arrows K(0,1)$.
Therefore, each $H$ is a clique and so the smallest one defines
$\HH$.  The fact that $\chib(\HH)=k+1$ requires
$\HH=\forb(K_{k+1})$.

So, $d^*(\HH)\leq\frac{1}{k+1}$ unless $\HH=\forb(K_{k+1})$.
\end{proof}

\subsection{Proof of Lemma~\ref{lem:glemma}}
\label{sec:glemma} Let $\uu'$ be an optimal solution of (\ref{QP})
and, among such solutions, it is one with the most number of zero
entries. Let $K^*$ be a CRG that corresponds to $\uu'$. Let
$\mkstarp$ be the associated matrix and $\uu^*$ be the vector formed
by removing the zero entries from $\uu'$.  By assumption, $\uu^*$ is
an optimal solution of
\begin{equation}
g_{K^*(p)}:=\left\{\begin{array}{rlcl}
               \min & \multicolumn{3}{l}{\uu^T\mkstarp\uu} \\
               {\rm s.t.} & \uu^T\one & = & 1 \\
                          & \uu & \geq & \zero ,
               \end{array}\right. \label{QPstar}
\end{equation}
where from $K^*$ the vertices that correspond to the deleted $0$
coordinates of $\uu'$ and the corresponding rows and columns of
$\mkstarp$ are removed. Furthermore, all
entries of $\uu^*$ are strictly positive.

Suppose $\mkstarp$ is not invertible, with $\mkstarp\x=\zero$ where
$\x\neq\zero$ and $\x^T\one\neq 0$. Then rescale $\x$ so that
$\x^T\one=1$.  Choose an $\epsilon>0$ such that
$(1-\epsilon)\uu^*+\epsilon\x$ has all nonnegative entries. This is
possible and is a feasible solution to the quadratic program
(\ref{QPstar}), producing the value
$$ \left((1-\epsilon)\uu^*+\epsilon\x\right)^T
   \mkstarp\left((1-\epsilon)\uu^*+\epsilon\x\right)
   =(1-\epsilon)^2(\uu^*)^T\mkstarp\uu^* , $$
which contradicts the presumed optimal value.

Suppose $\mkstarp$ is not invertible, with $\mkstarp\x=\zero$ where
$\x\neq\zero$ and $\x^T\one=0$. Then rescale $\bf x$ so that
$\uu^*+\x$ has nonnegative entries and at least one zero entry. This
is a feasible solution to (\ref{QPstar}), producing the value
$$ \left(\uu^*+\x\right)^T\mkstarp
   \left(\uu^*+\x\right)=(\uu^*)^T\mkstarp\uu^* ,
   $$
but this contradicts the value of $\uu^*$ because this vector has
zero entries.  More zero entries can be appended to create a
solution of (\ref{QP}) which has more zeros than $\uu'$.

Therefore, we may assume that $\mkstarp$ is invertible.  Note that
both it and its inverse are symmetric matrices. Define the following
vector:
$\z:=\mkstarp^{-1}\one/\left(\one^T\mkstarp^{-1}\one\right)$. Choose
$\epsilon>0$ small enough so that both
$\frac{1}{1+\epsilon}\left(\uu^*+\epsilon\z\right)$ and
$\frac{1}{1-\epsilon}\left(\uu^*-\epsilon\z\right)$ have all entries
nonnegative.  Such an $\epsilon$ exists because all entries of
$\uu^*$ are positive.

These are each feasible solutions and when
$\frac{1}{1\pm\epsilon}\left(\uu^* \pm\epsilon\z\right)$ is placed in
(\ref{QPstar}), it gives the value
\begin{eqnarray*}
\lefteqn{\frac{1}{(1\pm\epsilon)^2}
   \left(\uu^*\pm\epsilon\z\right)^T
   \mkstarp\left(\uu^*\pm\epsilon\z\right)} \\
   & = & \frac{1}{(1\pm\epsilon)^2}\left[(\uu^*)^T\mkstarp\uu^*\pm 2\epsilon(\uu^*)^T\mkstarp\z
   +\epsilon^2\z^T\mkstarp\z\right] \\
   & = & \frac{1}{(1\pm\epsilon)^2}\left[(\uu^*)^T\mkstarp\uu^*
   \pm 2\epsilon\frac{(\uu^*)^T\mkstarp\mkstarp^{-1}\one}
   {\one^T\mkstarp^{-1}\one}\right. \\
   & & \hspace{1in}\left.
   +\epsilon^2\frac{\one^T\mkstarp^{-1}\mkstarp\mkstarp^{-1}\one}
    {\left(\one^T\mkstarp^{-1}\one\right)^2}\right] \\
   & = & \frac{1}{(1\pm\epsilon)^2}
   \left[(\uu^*)^T\mkstarp\uu^*
   +\frac{\epsilon^2\pm 2\epsilon}
         {\one^T\mkstarp^{-1}\one}\right] \\
   & = & (\uu^*)^T\mkstarp\uu^*+\epsilon(\pm 2+\epsilon)
   \left[\frac{1}{\one^T\mkstarp^{-1}\one}
   -(\uu^*)^T\mkstarp\uu^*\right] .
\end{eqnarray*}
If $(\uu^*)^T\mkstarp\uu^*\neq\left(\one^T\mkstarp^{-1}\one\right)^{-1}$, then either $\uu^*+\epsilon\z$ or $\uu^*-\epsilon\z$ is a better
solution to (\ref{QPstar}) than $\uu^*$, a contradiction.

So, (\ref{QPstar}), hence (\ref{QP}), has value $\left(\one^T\mkstarp^{-1}\one\right)^{-1}$. \hfill$\Box$

\section{Proof of Theorem~\ref{thm:fgthm}}
\label{sec:fgthm} Our proof has the following outline:
\renewcommand{\theenumi}{\Alph{enumi}}
\begin{enumerate}
   \itsp Show that every graph $G$ on $n$ vertices and
   $p\binom{n}{2}$ edges has $\dist(G,\HH)\leq f(p)\binom{n}{2}$.
   \label{it:step1}
   \itsp Show that $f$ is continuous and so it achieves its
   maximum. \label{it:step2}
   \itsp Show that, for any fixed $p$ and for $\epsilon$ small enough,
   $\dist\left(G(n,p),\HH\right)\geq
   f(p)\binom{n}{2}-2\epsilon n^2$ for $n$ sufficiently large. \label{it:step3}
   \itsp Show that $g(p)=f(p)$ for all $p$. \label{it:step4}
   \itsp Show that $g(p)$ is concave. \label{it:step5}
\end{enumerate}

\subsubsection*{\ref{it:step1}: Upper bound.}
Recall that $f(p)=\inf\limits_{K\in\K(\HH)}f_K(p)$ and
$g(p)=\inf\limits_{K\in\K(\HH)}g_K(p)$.

Let $G$ be an arbitrary graph on $n$ vertices with $p\binom{n}{2}$
edges.  Let $K\in\K(\HH)$ with $k=\vbks+\vwks$.  We will randomly
partition $V(G)$ into $k$ pieces and delete and add edges in a
manner determined by $K$. For each $v\in V(G)$, randomly, and
independently from other vertices, place $v$ into $V_i$ with
probability $1/k$. Moreover, label the vertices of $K$ with
$\{v_1,\ldots,v_k\}$. Create $G'$ from $G$ by performing the
following action for each distinct $i$ and $j$ in $[k]$:
\begin{itemize}\firstspace
   \itsp If $v_i\in\vwk$, then delete the edges in $G$ having both
   endpoints in $V_i$.
   \itsp If $v_i\in\vbk$, then add the non-edges in $G$ having both
   endpoints in $V_i$.
   \itsp If $\{v_i,v_j\}\in\ewk$, then delete the edges in $G$
   having one endpoint in $V_i$ and the other in $V_j$.
   \itsp If $\{v_i,v_j\}\in\ebk$, then add the edges in $G$ having one endpoint in $V_i$ and the other in $V_j$.
\end{itemize}
If there is an induced copy of $H$ in $G'$, then there is a
colored-homomorphism from $H$ to $K$.  Since $K\in\K(\HH)$, there is
no $H\in\F(\HH)$ for which $H\arrows K$.  Thus, $G'\in\HH$.

The probability that an edge is deleted is $(\vwks+2\ewks)/k^2$ and the probability that a nonedge is added is $(\vbks+2\ebks)/k^2$.  Therefore,  expected number of changes is
$$ p\binom{n}{2}\frac{\vwks+2\ewks}{k^2}
   +(1-p)\binom{n}{2}\frac{\vbks+2\ebks}{k^2}
   =f_K(p)\binom{n}{2} . $$

This implies that there is a partition which results in at most
$f_K(p)\binom{n}{2}$ changes in order to transform $G$ into some
$G'\in\HH$,  i.e., $\dist(G,\HH)/\binom{n}{2}\leq f_K(p)$. Since
this is true for any $K\in\K(\HH)$,
$\dist(G,\HH)/\binom{n}{2}\leq\inf_{K\in\K(\HH)}f_K(p)=f(p)$.

\subsubsection*{\ref{it:step2}: Continuity of $f$.}
We differ slightly from Alon and Stav in their approach
in~\cite{AS1} to ensure the continuity of $f$.  For terminology and
citations of the theorems below, see chapter 7 of
Rudin~\cite{Rudin}.

The set $\K(\HH)$ is countable since the set of finite CRGs is
countable. Therefore, we can linearly order the members of $\K(\HH)$
as $K_1,K_2,\ldots$.  Let $m_n(p)=\min_{i\leq n}f_{K_i}(p)$ and
$f(p)=\inf_i f_{K_i}(p)$. Since each $f_{K_i}(p)$ is a line with
slope in $[-1,1]$, each $m_n(p)$ is Lipschitz with coefficient $1$.
So, $\{m_n\}$ forms an equicontinuous, pointwise bounded family. As
such, $\{m_n\}$ has a uniformly convergent subsequence. The limit
must, therefore, be continuous.  Since $m_n\rightarrow f$ pointwise,
this limit is $f(p)$.

Since $f$ is continuous, it achieves its maximum in the closed
interval $[0,1]$. Therefore, $\dist(n,\HH)\leq\max_{p\in [0,1]}
f(p)$. Define $p^*$ so that $f(p^*)$ is this maximum. Note that if
some lines are horizontal then $p^*$ is not necessarily
unique.

\subsubsection*{\ref{it:step3}: Lower bound for the random graph.}
Fix $p\in (0,1)$ and $\epsilon>0$. Let $S=S(\epsilon,\HH)$ the
function provided by the generalization of the Regularity Lemma,
cited as Lemma 2.7 in~\cite{AS1}.  The proof below follows ideas
similar to those in~\cite{AS1}.  Let $G\sim G(n,p)$.  A routine
application of the Chernoff bound (see~\cite{JLR}) gives that the
probability that every equipartition of $V(G)$ into $k\leq S$ pieces
$V_1\dotcup\cdots\dotcup V_k$ has the subgraphs $G[V_i]$ and the
bipartite subgraphs $G[(V_i,V_j)]$ with density in
$(p-n^{-0.4},p+n^{-0.4})$ for all distinct $i,j\in [k]$ is at most
$\exp\{-\Omega(n^{1.2})\}$, with $p$ and $S$ fixed. Choose $n$ to be
large enough for such a graph to exist and choose $G$ to be one such
graph.

Let $G'\in\HH$ have the property that $\dist(G,G')=\dist(G,\HH)$.
Apply the generalization of the Regularity Lemma to $G'$, with
parameters $\epsilon$ and $m=2\epsilon^{-1}$.  There is an
$S=S(\epsilon,\HH)$ such that there is an equipartition of the
vertex set: $V(G')=V_1\dotcup\cdots\dotcup V_k$, with $m\leq k\leq
S$. Each piece is of size either $L\stackrel{\rm def}{=}\lfloor
n/k\rfloor$ or $\lceil n/k\rceil$.

The graph $G''$ is constructed from this partition in such a way as
to ensure that $G''[V_i]$ is either an empty or complete graph and
either $d_{G''}(V_i,V_j)=0$ or $d_{G''}(V_i,V_j)=1$ or
$\epsilon/2\leq d_{G''}(V_i,V_j)\leq 1-\epsilon/2$. This is done by
deleting edges from sparse clusters and pairs and adding edges to
dense clusters and pairs.  Consequently, $\dist(G',G'')<
(\epsilon/2)n^2$.

This naturally yields a CRG, $K$, on the vertex
set $\{v_1,\ldots,v_k\}$ where $v_i$ is
$\left\{\mbox{white},\mbox{black}\right\}$ iff $G''[V_i]$ is
$\left\{\mbox{empty},\mbox{complete}\right\}$ and $\{v_i,v_j\}$ is
$\left\{\mbox{white},\mbox{black}\right\}$ iff
$\left\{d_{G''}(V_i,V_j)=0\right.$,\linebreak[4] $\left. d_{G''}(V_i,V_j)=1\right\}$; otherwise
$\{v_i,v_j\}$ is gray.  If there is a colored-homomorphism from
$H\in\F(\HH)$ to $K$, then the construction of $G''$ ensures that
$H$ is induced in both $G''$ and $G'$.  Therefore, $K$ must be in
$\K(\HH)$.

Since the distance between graphs is simply a symmetric difference
of edges, we see immediately that the triangle inequality applies:
\begin{eqnarray*}
   \dist(G,G') & \geq & \dist(G,G'')-\dist(G',G'') \\
   & \geq & \dist(G,G'')-(\epsilon/2) n^2 \\
   & \geq & \left(p-n^{-0.4}\right)\binom{L}{2}\vwks
   +\left(1-p-n^{-0.4}\right)\binom{L}{2}\vbks \\
   & & +\left(p-n^{-0.4}\right)L^2\ewks
   +\left(1-p-n^{-0.4}\right)L^2\ebks  -(\epsilon/2) n^2 \\
   & \geq &
   \frac{1}{k^2}\left(p\vwks+(1-p)\vbks\right)\binom{n}{2}
   \frac{(n-k)(n-2k)}{n(n-1)} \\
   & & +\frac{1}{k^2}\left(p\ewks+(1-p)\ebks\right)\binom{n}{2}
   \frac{(n-k)^2}{n(n-1)} \\
   & & -\frac{n^{1.6}}{2k}-\frac{n^{1.6}}{2}-(\epsilon/2) n^2 .
\end{eqnarray*}

For $n$ large enough,
\begin{eqnarray*}
   \dist(G,G') & \geq & \frac{1}{k^2}\left(p\vwks+(1-p)\vbks\right)\binom{n}{2}
   \left(1-\frac{3k}{n}\right) \\
   & & +\frac{1}{k^2}\left(p\ewks+(1-p)\ebks\right)\binom{n}{2}
   \left(1-\frac{2k}{n}\right)-\frac{3\epsilon}{4} n^2 \\
   & \geq & f_K(p)\binom{n}{2}-\epsilon n^2 .
\end{eqnarray*}

So, for each sufficiently small $\epsilon>0$, the probability that $G\sim G(n,p)$ satisfies
$\dist(G,\HH)\geq f(p)\binom{n}{2}-\epsilon n^2$ approaches $1$ as
$n\rightarrow\infty$.

The only place where randomness is used above is to show
that, with respect to any equipartition with $k\leq S$ parts, the
density of the pairs is close to $p$.  This is true for
$G\left(n,p{\scriptstyle\binom{n}{2}}\right)$ as well, therefore  we
 conclude that for all $\epsilon$ sufficiently small, the
probability that $G\sim G\left(n,p{\scriptstyle\binom{n}{2}}\right)$
satisfies $\dist(G,\HH)\geq f(p)\binom{n}{2}-\epsilon n^2$
approaches $1$ as $n\rightarrow\infty$.

Thus, $f(p)$ is the supremum of $\dist(G,\HH)$ for graphs $G$ of
density $p$ and\linebreak[4] $\dist(n,\HH)/\binom{n}{2}=f(p^*)-o(1)$.

\subsubsection*{\ref{it:step4}: Equality of $f$ and $g$.}
We address the $g$ functions.  Recalling (\ref{QP}),
$$ g_K(p)=\left\{\begin{array}{rlcl}
                    \min & \multicolumn{3}{l}{{\bf w}^T\mkp{\bf w}} \\
                    {\rm s.t.} & {\bf w}^T{\bf 1} & = & 1 \\
                               & {\bf w} & \geq & {\bf 0} .
                 \end{array}\right. $$

If $K$ has $k$ vertices, then ${\bf w}=\frac{1}{k}{\bf 1}$ is a
feasible solution, and $g_K(p)\leq f_K(p)$ for all $p\in [0,1]$.
Thus, $g(p)\leq f(p)$.

Fix $p\in[0,1]$ and $\epsilon\in(0,1)$ and choose a $K^*\in\K(\HH)$
such that $g_{K^*}(p)\leq g(p)+\epsilon/2$ and an optimal
solution in the corresponding quadratic program,
$\uu^*=(u_1,\ldots,u_k)$, has strictly positive entries.  We will
find a CRG, $L$ (for which the $\ell$ clusters are equally
weighted), that will approximate the weighted version of $K^*$.  Set
$\ell>5k\epsilon^{-1}$. Construct a CRG, $L$, on $\ell$ vertices
such that there are $\lfloor u_i\ell\rfloor$ or $\lceil
u_i\ell\rceil$ copies of vertex $x_i$ of $K^*$ in the natural way:
Let $y'$ be a copy of $x_i$ and $y''$ be a copy of $x_j$.
The vertex $y'$ has the same color as $x_i$ and $y''$ has the same
color as $x_j$.  If $i\neq j$, then $\{y',y''\}$ has the same color
as $\{x_i,x_j\}$. If $i=j$, then $\{y',y''\}$ has the same color as
vertex $x_i$.

Let $\tilde{\uu}=\left(\lceil u_1\ell\rceil,\ldots,\lceil
u_k\ell\rceil\right)$ and $\del=\tilde{\uu}-\ell{\uu}^*$.  Hence,
coordinatewise, $\zero\leq\del\leq k\one$. We can upper bound the
$f$ function of $L$:

\begin{eqnarray*}
   f_L(p) & = & \frac{1}{\ell^2}(\tilde{\uu})^T\mkstarp
   \tilde\uu \\
   & = & \frac{1}{\ell^2}\left(\ell{\uu}^*+\del\right)^T
   \mkstarp\left(\ell{\uu}^*+\del\right) \\
   & = & (\uu^*)^T\mkstarp\uu^*
   +\frac{2}{\ell}\uu^*\mkstarp\del
   +\frac{1}{\ell^2}\del^T\mkstarp\del \\
   & \leq & g_{K^*}(p)+\frac{2}{\ell}(\uu^*)^T{\bf J}\one
   +\frac{1}{\ell^2}\one^T{\bf J}\one \\
   & = & g_{K^*}(p)+\frac{2k}{\ell}(\uu^*)^T\one
   +\frac{k}{\ell^2}\one^T\one \\
   & = & g_{K^*}(p)+\frac{2k}{\ell}+\frac{k^2}{\ell^2} ,
\end{eqnarray*}
where $\bf J$ is the all ones $k\times k$ matrix. Since
$k/\ell<\epsilon/5<1/5$, it is true that
$2k/\ell+k^2/\ell^2<\epsilon/2$.  Therefore,
$$ f(p)\leq f_L(p)<g_{K^*}(p)+\frac{\epsilon}{2}<g(p)+\epsilon , $$
for all $\epsilon\in(0,1)$, yielding   $f(p)=g(p)$.

\subsubsection*{\ref{it:step5}: Concavity of $f(p)$.}
A function $h$ is concave on an interval domain if, whenever $a$ and $b$ are in the domain of $h$, then $h(ta+(1-t)b)\geq th(a)+(1-t)h(b)$ for all $t\in [0,1]$.

For the function $f$, the infimum of linear functions,
\begin{eqnarray*}
   f(ta+(1-t)b) & = & \inf_{K\in\K(\HH)}\{f_K(ta+(1-t)b)\}
   =\inf_{K\in\K(\HH)}\left\{tf_K(a)+(1-t)f_K(b)\right\} \\
   & \geq & t\left(\inf_{K\in\K(\HH)}\{f_K(a)\}\right) +(1-t)\left(\inf_{K\in\K(\HH)}\{f_K(b)\}\right) \\
   & = & tf(a)+(1-t)f(b) .
\end{eqnarray*}

\noindent This concludes
the proof of Theorem~\ref{thm:fgthm}. \hfill$\Box$

\section{The computation of $p^*$ and $d^*$ for specific families}

Let $t(n,k)$ denote the number of edges in the Tur\'an graph on $n$
vertices with no clique of order $k+1$.  The following is a result
of elementary computation:

$$ \frac{k-1}{k}\frac{n^2}{2}-\frac{k}{8}\leq
   t(n,k)=\frac{k-1}{k}\frac{n^2}{2}
   -\frac{k}{2}\left(\left\lceil\frac{n}{k}\right\rceil-\frac{n}{k}\right)
               \left(\frac{n}{k}-\left\lfloor\frac{n}{k}\right\rfloor\right)
               \leq\frac{k-1}{k}\frac{n^2}{2} . $$

\subsection{General approach}

To prove upper bounds on $d^*$, we use (\ref{QP}) and choose CRGs whose
curves intersect at $(p^*,d^*)$ or a curve that achieves its maximum
at $(p^*,d^*)$.

To prove lower bounds on $d^*$, we need to use a weighted Tur\'an approach
which seems to be quite difficult in general.  To see a simple
application of the weighted Tur\'an method, we provide a very short
proof of the lower bound in Theorem~\ref{thm:AKM} below:

Let $H$ be a graph with binary chromatic number $\chib$ and let $K$
be any CRG for which $H\not\arrows K$.  This immediately implies
that $K$ contains no clique of order $\chib+1$ whose edges are all
gray.

In particular, this implies that $\egk\leq t(k,\chib)$.  Setting
$p=1/2$, we see that
\begin{eqnarray*}
   f_K(1/2) & = &
   \frac{1}{k^2}\left[\frac{1}{2}\left(\vwks+2\ewks\right)
   +\frac{1}{2}\left(\vbks+2\ebks\right)\right] \\
   & = & \frac{1}{k^2}\left[\frac{k}{2}
   +\left(\ewks+\ebks\right)\right] \\
   & \geq & \frac{1}{k^2}\left[\frac{k}{2}
   +\left(\binom{k}{2}-t(k,\chib)\right)\right] \\
   & \geq & \frac{1}{2}-\frac{1}{k^2}t(k,\chib) \\
   & \geq & \frac{1}{2}-\frac{\chib-1}{2\chib}=\frac{1}{2\chib} ,
\end{eqnarray*}
and this proves the lower bound of Theorem~\ref{thm:AKM}.

\subsection{Edit distance of $K_a+E_b$}

\subsubsection{Upper bound}
Here, we choose $K'$ to have $|{\rm VW}(K')|=a-1$, $|{\rm
VB}(K')|=0$ and all edges gray.  Furthermore, we choose $K''$ to
have $|{\rm VW}(K'')|=0$, $|{\rm VB}(K'')|=b$ and all edges gray. It
is easy to see that both $K_a+E_b\not\arrows K'$ and
$K_a+E_b\not\arrows K''$.  An easy computation gives that
$g_{K'}(p)=\frac{p}{a-1}$ and $g_{K''}(p)=\frac{1-p}{b}$. The
intersection of the two functions is at the point
$(p^*,d^*)=\left(\frac{a-1}{a+b-1},\frac{1}{a+b-1}\right)$. Moreover, the fact that $\min\{g_{K'}(p),g_{K''}(p)\}$ is strictly unimodal, means that our proof below that $g(p^*)\geq d^*$ means that $p^*$ is the unique value at which $g(p)$ achieves its maximum.

\subsubsection{Weighted Tur\'an lemma}
The following lemma can be considered to be a generalization of
Tur\'an's theorem.  That is, if from Lemma~\ref{lem:KaEb} we only
apply condition (1)
but not condition (2),
then the answer is a basic consequence of Tur\'an.
\begin{lemma}
   Let $a\geq 2$ and let $K$ be a CRG with the property that
   any set $A$ of $a$ vertices has at least one of the following
   conditions:
   \renewcommand{\labelenumi}{(\arabic{enumi})}
   \begin{enumerate}
      \vspace{-9pt}\item $A$ contains at least one white edge,
      \label{it:white}
      \vspace{-9pt}\item $A$ contains a spanning subgraph of black edges.
      \label{it:black}
   \end{enumerate}
   Then
   $$ (a-1)\ewk+\ebk\geq\left\lceil\frac{n}{2}(n-a+1)\right\rceil
   . $$
   \label{lem:KaEb}
\end{lemma}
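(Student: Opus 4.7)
The plan is to first convert the hypothesis into a local condition at each vertex. An $a$-set $A$ fails both (1) and (2) precisely when there exists some $v\in A$ with $A\setminus\{v\}\subseteq N_G(v)$ and $A\setminus\{v\}$ carrying no white edge, where $N_G(v)$ denotes the gray neighborhood of $v$ in $K$. Thus the hypothesis is equivalent to: for every $v\in V(K)$, the white subgraph induced on $N_G(v)$ has independence number at most $a-2$.

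Next, I will rewrite the target inequality into a degree-sum form. Using $\ewks+\egks+\ebks=\binom{n}{2}$ together with $\sum_v d_G(v)=2\egks$ and $\sum_v d_W(v)=2\ewks$, the bound $(a-1)\ewks+\ebks\geq\lceil n(n-a+1)/2\rceil$ is equivalent (up to the half-integer rounding in the ceiling) to
\[ \egks \leq (a-2)\left(\ewks + \frac{n}{2}\right), \]
or $\sum_v[d_G(v)-(a-2)d_W(v)]\leq(a-2)n$. The extremal CRG---whose gray edges form an $(a-2)$-regular spanning subgraph and whose remaining edges are all black---achieves equality here, pinning down the correct constants.

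The core of the proof will be a weighted Tur\'an-type argument. For each vertex $v$, the local condition says $\alpha(G_W|_{N_G(v)})\leq a-2$, so the complement of $G_W|_{N_G(v)}$ is $K_{a-1}$-free, and Tur\'an's theorem yields $|E(G_W|_{N_G(v)})|\geq\binom{d_G(v)}{2}-t(d_G(v),a-2)$. These white edges sit inside $N_G(v)$ and contribute to the white degrees of vertices in $N_G(v)$, compensating for any vertex $v$ whose own gray degree is large relative to its white degree. Summing over $v$ via the identity $\sum_v|E(G_W|_{N_G(v)})|=\sum_{\{x,y\}\in E(G_W)}|N_G(x)\cap N_G(y)|$, together with convexity bounds on $\sum_v\binom{d_G(v)}{2}$ in terms of $\egks$, should yield the global degree-sum inequality. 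The main obstacle I anticipate is that this direct summation is sharp only at the extremal configuration and is slightly lossy for intermediate ranges of $\egks$; the cleanest resolution will likely be an induction on $n$, at each step deleting a vertex $v$ chosen so that its local parameters (either $d_G(v)=0$, which forces $(a-1)d_W(v)+d_B(v)\geq n-1$, or $d_G(v)\leq(a-2)(d_W(v)+1)$ as guaranteed by the local Tur\'an analysis) align with the inductive hypothesis, with a routine parity check handling the ceiling at small base cases.
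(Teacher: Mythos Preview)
Your local reformulation of the hypothesis (for every $v$, the white graph on the gray neighborhood $N_G(v)$ has independence number at most $a-2$) and the rewriting of the target as $|\mathrm{EG}| \le (a-2)\bigl(|\mathrm{EW}| + n/2\bigr)$ are both correct. The single-vertex induction you fall back on, however, does not close for $a \ge 4$. In the extremal configuration---no white edges, gray edges forming an $(a-2)$-regular graph, all remaining edges black---every vertex $v$ has $(a-1)d_W(v)+d_B(v)=n-a+1$, so deleting any vertex removes weight exactly $n-a+1$. But the target drops by $\lceil n(n-a+1)/2\rceil-\lceil (n-1)(n-a)/2\rceil$, which for $a$ even equals $n-a/2$. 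The shortfall of $a/2-1$ per step is unavoidable in this configuration: no choice of vertex does better, and the inductive hypothesis gives no access to the slack that the (now non-regular) remainder actually carries. The local Tur\'an analysis does guarantee some vertex $u$ with $d_G(u)\le(a-2)(d_W(u)+1)$ (take $v$ of maximum gray degree, then $u\in N_G(v)$ of maximum white degree within $N_G(v)$), but that bound only yields weight $\ge n-a+1$ at $u$, which is exactly the deficient value.

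The paper's proof avoids this by splitting rather than peeling. It takes $T\subseteq V(K)$ to be a maximal white-edge-free set and sets $S=V(K)\setminus T$. Each $s\in S$ has a white neighbor in $T$ (by maximality) and at most $a-2$ gray neighbors in $T$ (else $s$ together with $a-1$ such neighbors would be an $a$-set with no white edge and with $s$ isolated in black, violating both (1) and (2)). Hence the weight from $s$ into $T$ is at least $(a-1)+\bigl(|T|-1-(a-2)\bigr)=|T|$. Summing over $S$ gives cross weight at least $|S|\,|T|$, and applying induction to $S$ and to $T$ separately (the hypothesis is hereditary) yields total weight at least
\[
|S|\,|T|+\tfrac{1}{2}|S|\bigl(|S|-a+1\bigr)+\tfrac{1}{2}|T|\bigl(|T|-a+1\bigr)=\tfrac{1}{2}n(n-a+1),
\]
exactly. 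The case $S=\emptyset$ is handled directly: with no white edges at all, every vertex has black degree at least $n-a+1$, giving the bound in one line.
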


\begin{proof}
   We fix an integer $a\geq 2$ and proceed via induction on $n$.
   The base case, $n\leq a$ is trivial.

   Now, we assume that any CRG, $K'$, on $s<n$
   vertices that satisfies the conditions of the lemma has
   $(a-1)\ew(K')+\eb(K')\geq\lceil (s/2)(s-a)\rceil$.

   Let $K$ be a CRG on $n$ vertices.
   If it consists of only white edges, then
   $$ (a-1)\ewk+\ebk=(a-1)\ewk=(a-1)\binom{n}{2}
      \geq\left\lceil\frac{n}{2}(n-a+1)\right\rceil . $$

 Let   $V(K)\setm S$ be a maximal set of vertices that does not span a white edge.
  We may assume that $S\neq\emptyset$ because otherwise the minimum
   black degree is at least $n-a+1$, proving the claim of the
   theorem.  By the maximality of $V(K)\setm S$, for any $s\in S$ there exists a
   $t\in V(K)\setm S$ such that $st\in \ewk$.  Moreover, since
   there is no white edge in $V(K)\setm S$,
   vertex $s$ has at most $a-2$ gray neighbors in $V(K)\setm
   S$.  Otherwise, $s$ and $a-1$ gray neighbors in $V(K)\setm
   S$ will violate both conditions.

   The total weight of $K$ is as follows:
   \begin{itemize}\firstspace
      \itsp In the CRG induced by the vertex subset
      $V(K)\setm S$, the  weight is at least\linebreak[4]
      $\left\lceil (n-|S|)(n-|S|-a+1)/2\right\rceil$, by induction.
      \itsp In the CRG induced by the pair $(S,V(K)\setm S)$, each $s\in S$
       has at least one white neighbor and at most $a-2$ gray neighbors,
        so the  weight from $s$ into $V(K)\setm S$ is at least $(a-1)+(n-|S|-(a-2)-1)=n-|S|$.
      So the weight is at least $|S|(n-|S|)$.
      \itsp In the CRG induced by $S$, the total weight is at
      least $\left\lceil (|S|/2)(|S|-a+1)\right\rceil$, by
      induction.
   \end{itemize}
   Adding these together, the proof is complete. \\
\end{proof}

\noindent\textbf{Remark:} Note that equality holds when $S=\emptyset$ (i.e, there is no white edge)
 and the gray edges form a graph
that is either $(a-2)$-regular or has $n-1$ vertices of degree $a-2$
and one vertex of degree $a-3$, depending on divisibility.

\subsubsection{Lower bound}
Fix $p^*=\frac{a-1}{a+b-1}$.  Let $K$ be any CRG for which
$K_a+E_b\not\arrows K$.  To simplify notation, define $\kw$ to
be the CRG induced by $\vwk$.  First we will give a lower bound on
$p^*\ewks+(1-p^*)\ebks$.
\begin{itemize}\firstspace
   \itsp In the bipartite CRG induced by $(\vwk,\vbk)$, all edges must be
   black, otherwise $K_a+E_b\arrows K$.  These edges contribute
   a weight of $(1-p^*)|\vwk||\vbk|$.
   \itsp In the CRG induced by $\vbk$, each set of $b+1$ vertices
   has at least one black edge in the CRG they induce, otherwise
   $K_a+E_b\arrows K$.   The $a$-clique maps to one vertex and the
   $b$-coclique maps to the remaining $b$ black vertices.  By
   Tur\'an's theorem, these edges contribute a weight
   of
   at least $(1-p^*)\left[\binom{\vbks}{2}-t(\vbks,b)\right]$.
   \itsp In the CRG induced by $\vwk$, consider a set of $a$
   vertices.  If there is neither a white edge nor a spanning
   subgraph of black
   edges, then the vertices can be labeled $v_1,\ldots,v_a$ such
   that the $a-1$ edges incident to $v_1$ are all gray and
   $\{v_2,\ldots,v_a\}$ induces a CRG with all edges either gray
   or black.\\
   In this case, map the $b$-coclique to $v_1$ and the $a$ vertices
   of the clique to $v_1,\ldots,v_a$.  This exhibits the fact that
   $K_a+E_b\arrows K$.  We will apply Lemma~\ref{lem:KaEb}
   to $\kw$.  As a result, these edges contribute
   a weight of at least
   \begin{eqnarray*}
      \lefteqn{p^*|{\rm EW}(\kw)|+(1-p^*)|{\rm EB}(\kw)|} \\
      & = & \frac{1}{a+b-1}\left[(a-1)|{\rm EW}(\kw)|
      +b|{\rm EB}(\kw)|\right] \\
      & \geq & \frac{1}{a+b-1}\left[(a-1)|{\rm EW}(\kw)|
      +|{\rm EB}(\kw)|\right] \\
      & \geq &
      \frac{1}{a+b-1}\left\lceil\frac{\vwks}{2}
      (\vwks-a+1)\right\rceil .
   \end{eqnarray*}
\end{itemize}
The remaining edges of the CRG contribute the following to the weight:
\begin{eqnarray*}
   \lefteqn{(1-p^*)  \vwks\vbks
   +(1-p^*)\left(\binom{\vbks}{2}-t\left(\vbks,b\right)\right)} \\
   & \geq & \frac{1}{a+b-1}\left(b\vwks\vbks
   +b\left(\binom{\vbks}{2}-t\left(\vbks,b\right)\right)\right) \\
   & \geq & \frac{1}{a+b-1}\left(b\vwks\vbks
   +\frac{\vbks^2}{2}-\frac{b\vbks}{2}\right). \\
\end{eqnarray*}
Computing $f_K(p^*)$ gives, by definition,
\begin{eqnarray*}
   f_K(p^*) & = &
   \frac{1}{k^2}\left(p^*\left(\vwks+2\ewks\right)
   +(1-p^*)\left(\vbks+2\ebks\right)\right) \\
   & \geq & \frac{1}{(a+b-1)k^2}\left((a-1)\vwks+b\vbks
   +2b\vwks\vbks\right. \\
   & & \left.+\vbks^2-b\vbks+\vwks(\vwks-a+1)\right) \\
   & = & \frac{1}{(a+b-1)k^2}\left(2b\vwks\vbks
   +\vbks^2+\vwks^2\right) \\
   & = & \frac{1}{(a+b-1)k^2}\left(k^2+2(b-1)\vwks\vbks\right) \\
   & \geq & \frac{1}{a+b-1} .
\end{eqnarray*}
Therefore, $d^*(\forb(K_a+E_b))=\frac{1}{a+b-1}$ and
$p^*(\forb(K_a+E_b))=\frac{a-1}{a+b-1}$. \hfill$\Box$

\subsection{Edit distance of $K_{3,3}$}

\subsubsection{Upper bound}
The Young tableau in Figure~\ref{fig:k33young} diagrams the values
of $(a,c)$ for which $K_{3,3}\not\arrows K(a,c)$ and
Figure~\ref{fig:k33graph} gives the graph of $K(1,2)$ with the
region it defines shaded.
\begin{center}
\begin{figure}[ht]
   \hfill
   \begin{minipage}[t]{125pt}
      \begin{center}
         \epsfig{file=./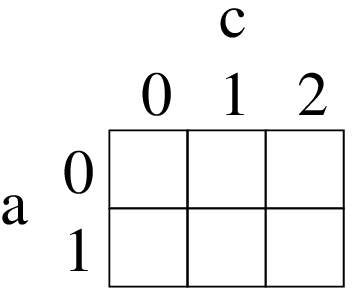,scale=0.5}
         \caption{The Young tableau of $(a,c)$ for which
         $K_{3,3}\not\arrows K(a,c)$.}
         \label{fig:k33young}
      \end{center}
   \end{minipage}
   \hfill
   \begin{minipage}[t]{250pt}
      \begin{center}
         \epsfig{file=./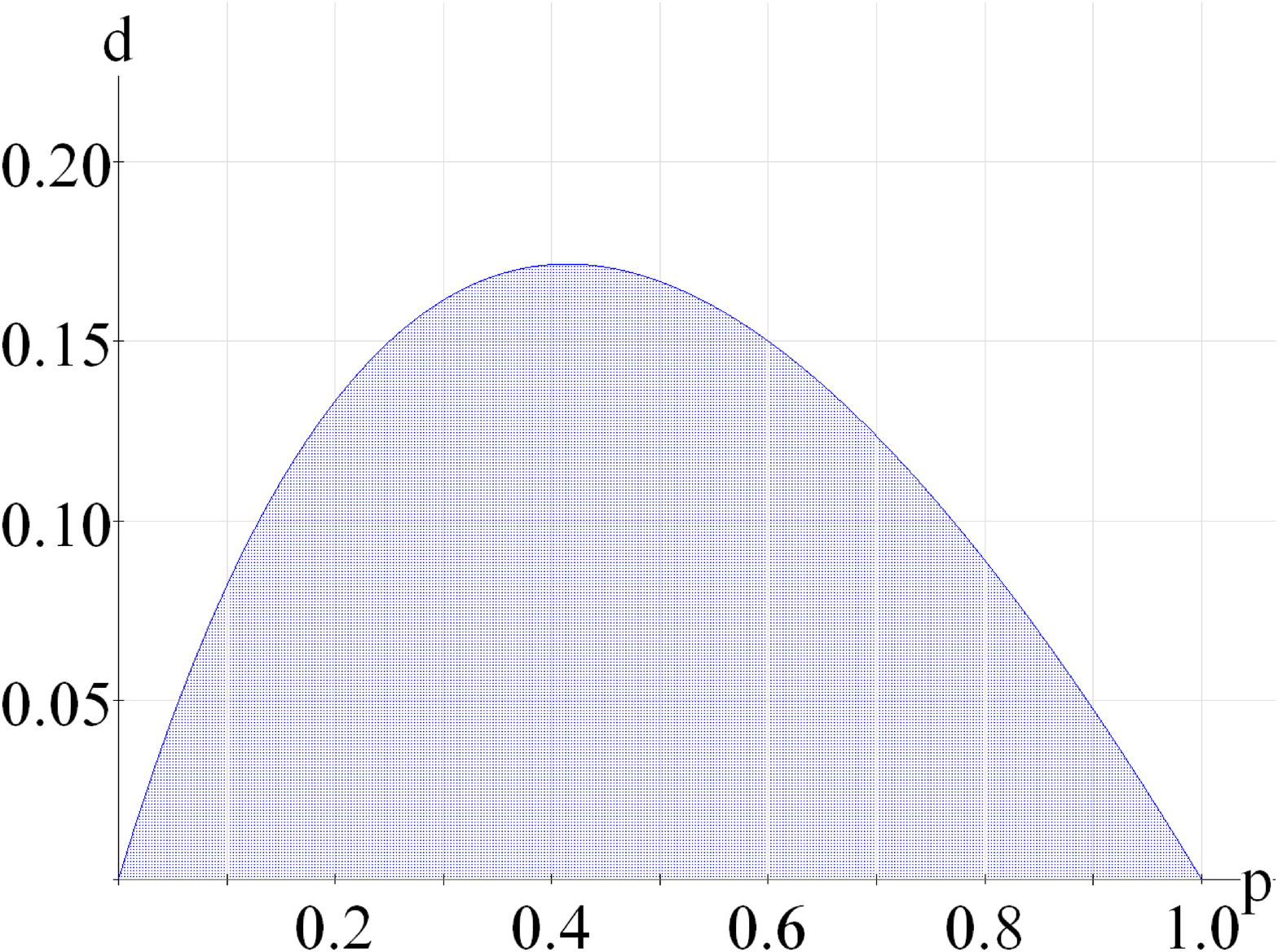,scale=0.15}
         \caption{The graph of $g_{K(1,2)}(p)$.}
         \label{fig:k33graph}
      \end{center}
   \end{minipage}
\end{figure}
\end{center}

Here, we choose $K'$ to have $|{\rm VW}(K')|=1$, $|{\rm VB}(K')|=2$
and all edges are gray.  That is, $K'=K(1,2)$ and it is easy to see that
$K_{3,3}\not\arrows K'$ .  We can use Lemma~\ref{lem:glemma} to
compute that $g_{K'}(p)=\frac{p(1-p)}{1+p}$. The maximum of this
function on $[0,1]$ occurs at
$(p^*,d^*)=\left(\sqrt{2}-1,3-2\sqrt{2}\right)$.

\subsubsection{Lower bound}
Fix $p^*=\sqrt{2}-1$.  Let $K$ be any CRG for which
$K_{3,3}\not\arrows K$.  For simplicity of notation, define $\kb$ to
be the CRG induced by $\vbk$.  First we will give a lower bound on
$p^*\ewks+(1-p^*)\ebks$.
\begin{itemize}\firstspace
   \itsp In the CRG induced by $\vwk$, all edges must be white,
   otherwise $K_{3,3}\arrows K$.  These edges contribute a weight of
   $p^*\binom{\vwks}{2}$.
   \itsp In the bipartite CRG induced by $\left(\vwk,\vbk\right)$, if there
   is a triangle $\{b_1,b_2,b_3\}$ in $\vbk$ that has all edges
   white or gray then, for every $w\in\vwk$, $\{b_i,w\}$ is white
   for at least one $i\in\{1,2,3\}$.  Otherwise, $K_{3,3}\arrows
   K$.\\
   Since there must be a white edge between every white/gray
 triangle in $\vbk$ and every vertex in $\vwk$, let $C\subseteq\vbk$ be a minimum-sized vertex set that
   contains a vertex from every triangle with no black edges in
   $\kb$.  These edges contribute a weight of at least
   $p^*\vwks|C|$.
   \itsp Let $\kbc$ denote the CRG induced by $\vbk\setm C$.
   In $\kb$, there can be no triangle with all edges gray, otherwise $K_{3,3}\arrows K$.  By the definition of $C$, in
   $\kbc$ there can be no triangle with all edges white
   or gray.  These edges contribute a weight of at least
   \begin{eqnarray*}
      \lefteqn{\min\{p^*,1-p^*\}\left(\binom{\vbks}{2}
      -t(\vbks,2)\right)} \\
      & & +(1-2\min\{p^*,1-p^*\})
      \left(\binom{|\vbk\setm C|}{2}-t(|\vbk\setm C|,2)\right) .
   \end{eqnarray*}
\end{itemize}

Since $p^*<0.5$, $p^*\ewks+(1-p^*)\ebks$  is at least
\begin{eqnarray*}
   \lefteqn{p^*\binom{\vwks}{2}+p^*\vwks|C|
   +p^*\left(\frac{\vbks^2-2\vbks}{4}\right)} \\
   & &  +(1-2p^*)\left(\frac{|\vbk\setm C|^2
   -2|\vbk\setm C|}{4}\right) .
\end{eqnarray*}

A lower bound on $f_K(p^*)$ gives
\begin{eqnarray}
   f_K(p^*)k^2 & = & p^*\left(\vwks+2\ewks\right)
   +(1-p^*)\left(\vbks+2\ebks\right) \nonumber \\
   & \geq & p^*\vwks+(1-p^*)\vbks+2p^*\binom{\vwks}{2} \nonumber \\
   & & +2p^*\vwks|C|
   +2p^*\left(\frac{\vbks^2-2\vbks}{4}\right) \nonumber \\
   & & +2(1-2p^*)\left(\frac{|\vbk\setm C|^2
   -2|\vbk\setm C|}{4}\right) \nonumber \\
   & \geq & (1-2p^*)|C|+p^*\vwks^2+2p^*\vwks|C| \nonumber \\
   & & +\frac{1-p^*}{2}\vbks^2-(1-2p^*)\vbks|C|
   +\frac{1-2p^*}{2}|C|^2 \nonumber \\
   & \geq & p^*\vwks^2+\frac{1-p^*}{2}\vbks^2 \nonumber \\
   & & +\frac{1-2p^*}{2}|C|\left(|C|-2\vbks
   +\frac{4p^*}{1-2p^*}\vwks\right) . \label{eq:Cterm}
\end{eqnarray}

All that remains is to verify that the expressions in (\ref{eq:Cterm}) is at most $\frac{p^*(1-p^*)}{1+p^*}k^2$.
We need to divide this into two cases.  First, assume
$\vbks\leq\frac{2p^*}{1-2p^*}\vwks$. In this case, the value of
$|C|$ that minimizes (\ref{eq:Cterm}) is
$|C|=0$,
\begin{eqnarray*}
   f_K(p^*)k^2 & \geq & p^*\vwks^2+\frac{1-p^*}{2}\vbks^2 \\
   & \geq & \left(p^*\left(\frac{1-p^*}{1+p^*}\right)^2+\frac{1-p^*}{2}
   \left(\frac{2p^*}{1+p^*}\right)^2\right)k^2 \\
   & = & \frac{p^*(1-p^*)}{1+p^*}k^2=(3-2\sqrt{2})k^2 ,
\end{eqnarray*}
because the minimum occurs at $\vbks=\frac{2p^*}{1+p^*}k$.
Second, assume
$\vbks\geq\linebreak[4]\frac{2p^*}{1-2p^*}\vwks$, i.e, $\vbks\ge 2p^*k$. In this
case, the value of $|C|$ that minimizes (\ref{eq:Cterm}) is $|C|=\vbks-\frac{2p^*}{1-2p^*}\vwks$:
\begin{eqnarray*}
   f_K(p^*)k^2
   & \geq & p^*\vwks^2+\frac{1-p^*}{2}\vbks^2
    -\frac{1-2p^*}{2}\left[\vbks
   -\frac{2p^*}{1-2p^*}\vwks\right]^2 \\
   & = & \left(p^*-\frac{2(p^*)^2}{1-2p^*}\right)\vwks^2
   +\frac{p^*}{2}\vbks^2
    +2p^*\vwks\vbks \\
   & = & p^*k^2-\frac{2(p^*)^2}{1-2p^*}\vwks^2-\frac{p^*}{2}\vbks^2
   .
\end{eqnarray*}

This expression is minimized at the endpoints of the domain of
$\vbks$.  For\linebreak[4] $\vbks=k$, we have $\frac{p^*}{2}\approx
0.207>\frac{p^*(1-p^*)}{1+p^*}=3-2\sqrt{2}\approx 0.172$. For the
other endpoint, $\vbks=2p^*k$, we have
   $$f_K(p^*)k^2  \geq  p^*k^2-\frac{2(p^*)^2}{1-2p^*}(1-2p^*)^2k^2
   -\frac{p^*}{2}(2p^*)^2k^2
   =  \left[p^*-2(p^*)^2+2(p^*)^3\right]k^2 .$$
This gives $f_K(p)\geq 15\sqrt{2}-21\approx 0.213>\frac{p^*(1-p^*)}{1+p^*}=3-2\sqrt{2}\approx 0.172$.

To summarize,
$f(p^*)\geq\frac{p^*(1-p^*)}{1+p^*}=3-2\sqrt{2}$. Therefore,
$d^*(\forb(K_{3,3}))=3-2\sqrt{2}$ and
$p^*(\forb(K_{3,3}))=\sqrt{2}-1$.

\subsection{Edit distance of $H_9$}

\subsubsection{Upper bound}
The Young tableau in Figure~\ref{fig:h9young} diagrams the values of
$(a,c)$ for which $H_9\not\arrows K(a,c)$.  To see this, we can
exhibit the following partitions of $V(G)$:
\begin{itemize}\firstspace
   \itsp \textbf{3 cliques}:
   $\left\{\{0,1,2\},\{3,4,5\},\{6,7,8\}\right\}$
   \itsp \textbf{1 coclique, 2 cliques}:
   $\left\{\{2,7\},\{8,0,1\},\{3,4,5,6\}\right\}$
   \itsp \textbf{2 cocliques, 1 clique}:
   $\left\{\{1,4,7\},\{2,5,8\},\{0,3,6\}\right\}$
   \itsp \textbf{4 cocliques}:
   $\left\{\{1,4,7\},\{0,5\},\{3,8\},\{2,6\}\right\}$.
\end{itemize}
Moreover, it is easy to see that the largest clique of $H_9$ is 4 and the largest coclique is 3.  So, $H_9\not\arrows K(0,2),K(1,1)$. Since there are only two cocliques of size 3, $H_9\not\arrows K(3,0)$.

Figure~\ref{fig:h9graph} gives the graphs of $g_{K(0,2)}(p)$,
$g_{K(3,0)}(p)$ as well as $g_{K''}(p)$ for the $K''$ defined in the
theorem.  The region they define is shaded.
\begin{center}
\begin{figure}[ht]
   \hfill
   \begin{minipage}[t]{125pt}
      \begin{center}
         \epsfig{file=./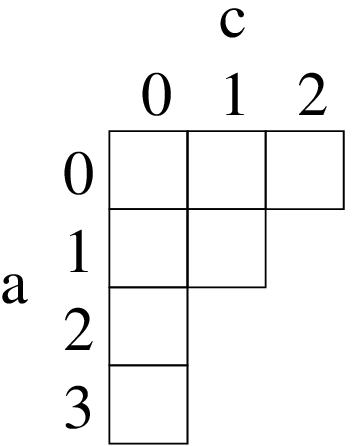,scale=0.5}
         \caption{The Young tableau of $(a,c)$ for which
         $H_9\not\arrows K(a,c)$.}
         \label{fig:h9young}
      \end{center}
   \end{minipage}
   \hfill
   \begin{minipage}[t]{250pt}
      \begin{center}
         \epsfig{file=./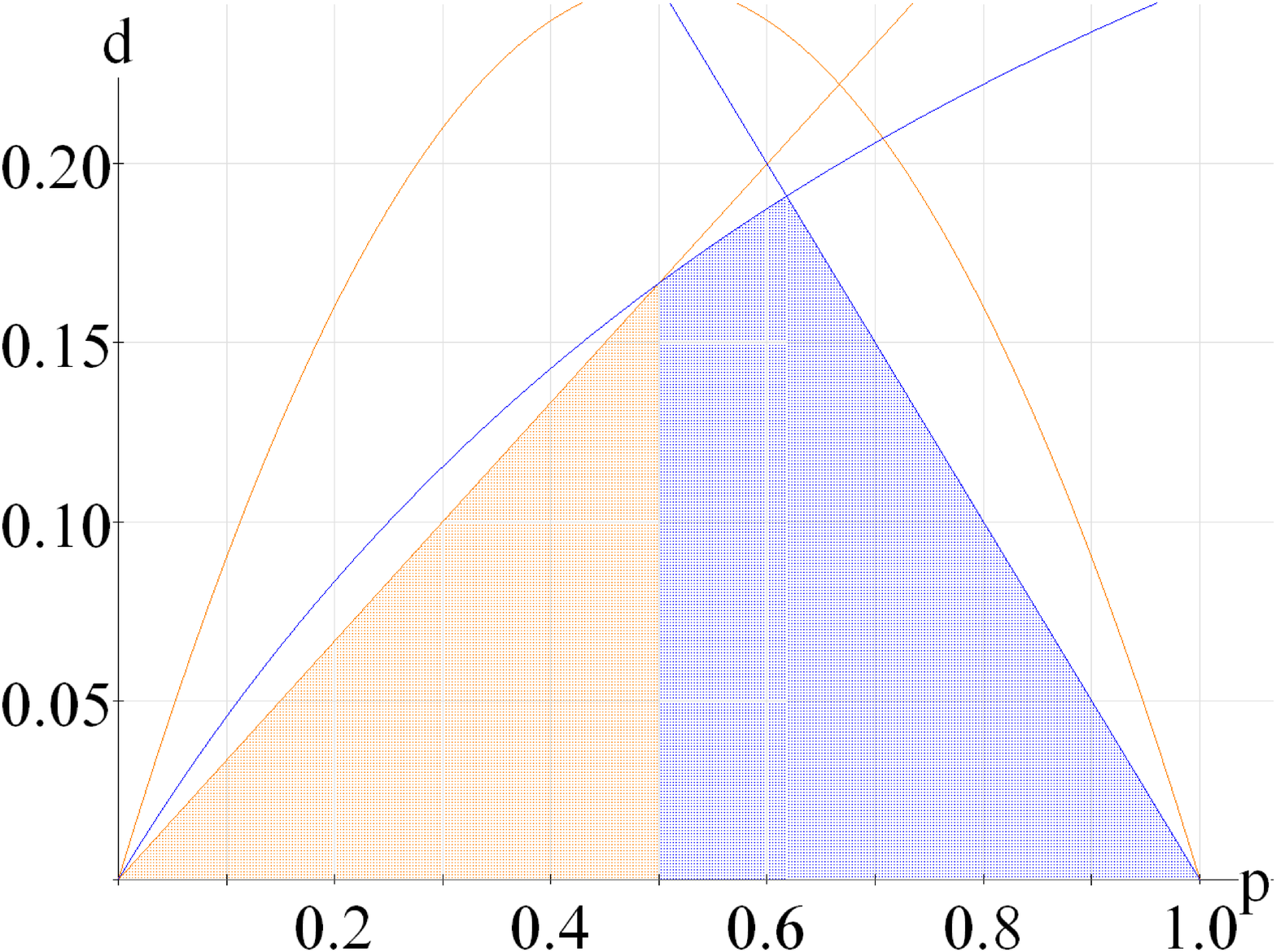,scale=0.15}
         \caption{The graphs of the $g_K(p)$ relevant to $H_9$.}
         \label{fig:h9graph}
      \end{center}
   \end{minipage}
\end{figure}
\end{center}

Recall that $K''$ satisfies $|{\rm VW}(K'')|=4$, $|{\rm
VB}(K'')|=0$, one black edge and 5 gray edges.  See
Figure~\ref{fig:k2}.
\begin{center}
\begin{figure}[ht]
   \centerline{\epsfig{file=./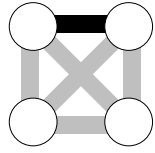,scale=0.5}}
   \caption{The colored regularity graph $K''$.} \label{fig:k2}
\end{figure}
\end{center}
The graph $H_9$ has only two cocliques of order three:
$\{1,4,7\}$ and $\{2,5,8\}$.  The vertices that remain, $\{0,3,6\}$,
form a clique.  So, any partition of the vertices of $H_9$ into
cocliques that uses both of these $3$-cocliques,
requires 5 pieces to the partition.  As a result, if there were a
colored-homomorphism from $H_9$ into $K''$, it would partition the
vertices into one coclique of order 3 and three cocliques of order 2.

Assuming such a colored-homomorphism exists, we assume, without loss
of generality, that one of the cocliques is $\{1,4,7\}$.  The vertex
$0$ is only nonadjacent to $5$.   The vertex $3$ is only nonadjacent
to $8$.  The vertex $6$ is only nonadjacent to $2$. Therefore, the
only partition that can witness the colored-homomorphism is
$\left\{\{1,4,7\},\{0,5\},\{2,6\},\{3,8\}\right\}$. Between every
pair of these cocliques is a nonedge.  So, no pair of them could be
mapped to endpoints of the black edge of $K''$. Therefore,
$H_9\not\arrows K''$.

We can use Lemma~\ref{lem:glemma} to conclude that
$g_{K'}(p)=\frac{1-p}{2}$ and $g_{K''}(p)=\frac{p}{2(1+p)}$. The
intersection is at the point
$(p,d)=\left(\frac{\sqrt{5}-1}{2},\frac{3-\sqrt{5}}{4}\right)$.

Thus, the upper bound obtained by $d^*\leq\max\limits_{p\in
[0,1]}\min\limits_{(a,c) : K(a,c)\in\K(\HH)} g_{K(a,c)}(p)$ would be
$1/5=0.2$, achieved by the intersection of
$g_{K(0,2)}(p)=\frac{1-p}{2}$ and $g_{K(3,0)}(p)=\frac{p}{3}$.  But,
as we can see by Figure~\ref{fig:h9graph}, the value
$d=\frac{3-\sqrt{5}}{4}\approx 0.191$ provides a better upper bound.

\subsection{New proof of the edit distance of $\overline{P_3+K_1}$}

Alon and Stav~\cite{AS2} computed $(p^*,d^*)$ for hereditary properties
defined by graphs on at most $4$ vertices.  This paper has also done so, as a corollary of
 our results, with the exception of $\forb\left(\overline{P_3+K_1}\right)$.
  We include a computation of the value of $(p^*,d^*)$ as an application of our
   technique and to demonstrate the versatility of Lemma~\ref{lem:KaEb}.

\subsubsection{Upper bound}
Here, we choose $K'=K(0,1)$ and $K''=K(2,0)$.  Recall that
$\overline{P_3+K_1}$ is a triangle with a pendant edge.  It is easy
to see that both $\overline{P_3+K_1}\not\arrows K'$ and
$\overline{P_3+K_1}\not\arrows K''$.  We can use
Lemma~\ref{lem:glemma} to conclude that $g_{K'}(p)=1-p$ and
$g_{K''}(p)=\frac{p}{2}$. The intersection is at the point
$(p^*,d^*)=\left(\frac{2}{3},\frac{1}{3}\right)$.  Clearly, this $p^*$ is unique because $g(p)<1/3$ for all $p\neq 2/3$.

\subsubsection{Lower bound}
Fix $p^*=\frac{2}{3}$.  Let $K$ be any CRG for which
$\overline{P_3+K_1}\not\arrows K$.  For simplicity of notation,
define $\kw$ to be the CRG induced by $\vwk$.  We will give a lower bound on $p^*\ewks+(1-p^*)\ebks$.
\begin{itemize}\firstspace
   \itsp In the bipartite CRG induced by $(\vwk,\vbk)$, no edges can be gray,
   otherwise $\overline{P_3+K_1}\arrows K$.  This contributes
   $\min\{p^*,1-p^*\}\vwks\vbks$.
   \itsp In the CRG induced by $\vbk$, no edges can be gray,
   otherwise $\overline{P_3+K_1}\linebreak\arrows K$.   This contributes
   $\min\{p^*,1-p^*\}\binom{\vbks}{2}$.
   \itsp In $\kw$, consider any subset of $3$ vertices.  If
   there is neither a white edge nor a pair of black edges, then it is
   possible to map the vertices of $\overline{P_3+K_1}$
   into those three vertices so that each vertex of the triangle is
   mapped to a different vertex and the
   pendant vertex is in the vertex incident to the two gray
   edges.  This contributes $p^*\ew(\kw)+(1-p^*)\eb(\kw)$.
\end{itemize}

To summarize, using $p^*=2/3$ and  Lemma~\ref{lem:KaEb}, we have
\begin{eqnarray*}
   \lefteqn{p^*\ewks+(1-p^*)\ebks} \\ & \geq &
   \frac{1}{3}\left(\vwks\vbks+\binom{\vbks}{2}\right)
   +\frac{1}{3}\left(2|\ew(\kw)|+|\eb(\kw)|\right) \\
   & \geq &
   \frac{1}{3}\left(\vwks\vbks+\binom{\vbks}{2}\right)
   +\frac{1}{3}\left\lceil\frac{\vwks}{2}(\vwks-2)
   \right\rceil \\
   & = & \frac{1}{3}\left(\vwks\vbks+\binom{\vbks}{2}
   +\frac{\vwks^2-2\vwks}{2}\right) .
\end{eqnarray*}
Now we give  a lower bound on $f_K(p^*)$:
\begin{eqnarray*}
   f_K(p^*)k^2 & = & p^*\left(\vwks+2\ewks\right)
   +(1-p^*)\left(\vbks+2\ebks\right) \\
   & \geq & \frac{2}{3}\vwks+\frac{1}{3}\vbks
   +\frac{2}{3}\left(\vwks\vbks\right. \\
   & & +\left.\frac{\vbks^2-\vbks}{2}
   +\frac{\vwks^2-2\vwks}{2}\right) \\
   & = & \frac{1}{3}\left(\vwks^2+2\vwks\vbks+\vbks^2\right)
   =\frac{1}{3}k^2 .
\end{eqnarray*}

So, comparing with the upper bound, $d^*(\forb(\overline{P_3+K_1}))=1/3$ and since
 $g(p)\leq\min\{g_{K'}(p),$ $g_{K''}(p)\}<1/3$ for all $p\neq 2/3$, it is also the case that
$p^*(\forb(\overline{P_3+K_1}))=2/3$. \hfill $\Box$

\section{Conclusions}

\subsection{Observations on functions $f$ and $g$}
The function $f_K(p)$ is invariant under equipartitions of $V(K)$.
To see this, let $\tilde{K}$ be formed by partitioning each vertex
of $K$ into $c$ pieces with the colors of vertices and edges be the
natural coloring inherited from $K$.  As a result,
$|\vw(\tilde{K})|=c\vwks$, $|\vb(\tilde{K})|=c\vbks$,
$|\ew(\tilde{K})|=c^2\ewks+\binom{c}{2}\vwks$ and
$|\eb(\tilde{K})|=c^2\ebks+\binom{c}{2}\vbks$.  Thus, $$
f_{\tilde{K}}(p)  =  \frac{1}{c^2k^2}
   \left[p(|\vb(\tilde{K})|+2|\ew(\tilde{K})|)
   +(1-p)(|\vw(\tilde{K})|+2|\eb(\tilde{K})|)\right] \\
    =  f_K(p) .
$$

The same is true for $g_K(p)$ and $g_{\tilde{K}}(p)$.  Any feasible
solution, $\uu$ of the quadratic program that defines $g_K(p)$ can be
made into a feasible solution, $\tilde{\uu}$ of the quadratic program
that defines $g_{\tilde{K}}(p)$ by arbitrarily distributing the
weight of one vertex in $K$ to the vertices in $\tilde{K}$ to which
it corresponds.  It can be seen that, if $\M_K(p)$ is the matrix
corresponding to $K$ and $\M_{\tilde{K}}(p)$ is the matrix
corresponding to $\tilde{K}$, then
$\uu^T\M_K(p)\uu=\tilde{\uu}^T\M_{\tilde{K}}(p)\tilde{\uu}$.

The function $g$ is more flexible, however.  It is not only
invariant under equipartitions but it is invariant under arbitrary
partitions. To see this, construct an equivalence relation on the
vertices of a CRG, $K$, in which vertices $u$ and $v$ are equivalent
if  $u$, $v$ and $\{u,v\}$ are all the same color and, for all $w\in
V(K)\setm\{u,v\}$, $\{u,w\}$ and $\{v,w\}$ are the same color as
each other.

If $K$ is a CRG and $K_0$ is the CRG induced by the equivalence
relation on $K$, then $g_K(p)=g_{K_0}(p)$.  Therefore, in the
computation of $g(p)$, one may ignore CRGs which have nontrivial
equivalence classes.

\subsection{Open questions}
\begin{itemize}\firstspace\firstspace
   \itsp Investigating Proposition~\ref{prop:UB}, is there a more
   convenient expression for the upper bound based only on the Young
   diagram (see Figures~\ref{fig:k33young} and~\ref{fig:h9young}) of
   the set of CRGs $\{K(a,c) : H\not\arrows K(a,c), \forall
   H\in\F(\HH)\}$?

   \itsp To compute the edit distance is hard, we do not have sharp
   result even for $\forb(K_{m,n})$, where $K_{m,n}$ is an arbitrary
   complete bipartite graph.

   \itsp The precise value for $d^*(H_9)$, where $H_9$ is defined
   in Theorem~\ref{thm:h9}, is unknown, but we conjecture that the
   upper bound is correct and we further conjecture that
   $p^*(\forb(H_9))=(\sqrt{5}-1)/2$.

   \itsp Every hereditary property $\HH$ can be expressed as
   $\bigcap_{H\in\F(\HH)}\forb(H)$ for some family of graphs
   $\F(\HH)$.  In computing edit distance, it may be that some
   members of $H\in\F(\HH)$ are unnecessary, even if they are
   necessary to define the family.  I.e, for hereditary property
   $\HH$, what are the maximal properties $\HH'\supseteq\HH$ such
   that $d^*(\HH')=d^*(\HH)$?

   For example, the strong perfect graph theorem~\cite{CRST} states
   that perfect graphs are characterized by $\PP=\bigcap_{k\geq
   2}\left(\forb(C_{2k+1})\cap\forb(\overline{C_{2k+1}})\right)$.
   But, it is not difficult to use Theorem~\ref{thm:cnaught} to show that
   $d^*(\PP)=d^*(\forb(C_5))=1/4$, as observed by Alon and Stav~\cite{AS1}.

   \itsp Our proofs of the lower bounds for
   $d^*(\forb(H))$ for $H=K_a+E_b$ or $H=K_{3,3}$ are
   cumbersome and cannot assume that the total number of
   vertices in each of the forbidden CRGs is bounded by any function of $H$.  Is there a better
   way to compute the lower bound?  Is there a function of $H$ so
   that we need only to consider $g_K(p)$ for $K$ whose order is
   bounded by said function?

   \itsp Finally, the edit distance is unknown for most hereditary
   properties.  So-called unit disk graphs (UDGs), see~\cite{CCJ},
   define a hereditary property but the family $\F$ is not known.
   It is easy to see that $K_{1,7}$ cannot occur as an induced
   subgraph in a UDG.  (For some definitions of the unit disk graph,
   $K_{1,6}$ is forbidden also.)  We believe that a small family of
   such forbidden induced subgraphs will be enough to determine the
   edit distance from the family of UDGs.

       For any graph $H$, both $p^*(\forb(H))$ and $d^*(\forb(H))$
       can be considered invariants of graph $H$.  Being able to
       compute these invariants even for some given fixed graph
       seems to be quite difficult in general.
\end{itemize}

\subsection{Thanks}
We thank Maria Axenovich for valuable conversations. We also thank
Noga Alon for reading a draft of the paper and making several useful
comments. These include the observation that the existence of the
limit that defines $d^*(\HH)$ results from a simple monotonicity
argument.  We appreciate the careful reading of an anonymous referee, which lead to improvements in the manuscript.

\end{document}